\numberwithin{equation}{section}
\newtheorem{theorem}{Theorem}[section]
\newtheorem{lemma}[theorem]{Lemma}
\newtheorem{corollary}[theorem]{Corollary}
\theoremstyle{definition}
\newtheorem{definition}[theorem]{Definition}
\theoremstyle{remark}
\newtheorem{remark}[theorem]{Remark}
\def\Re{\operatorname{{Re}}}
\def\Im{\operatorname{{Im}}}
\DeclareMathOperator*{\esssup}{ess\,sup}
\author[Faminskii]{Andrei V. Faminskii}
\address{RUDN University, 6 Miklukho-Maklaya Street, Moscow, 117198, Russian Federation}
\email{afaminskii@sci.pfu.edu.ru}
\subjclass{35Q55, 35Q53}
\keywords{boundary controllability, terminal overdetermination, higher order nonlinear Schr\"{o}dinger equation, initial-boundary value problem}
\thanks{This paper has been supported by Russian Science Foundation grant 23-21-00101.}
\title[Higher order nonlinear Schr\"{o}dinger equation]{On boundary controllability for the higher order nonlinear Schr\"{o}dinger equation}
\date{}
\begin{document}
\maketitle

\begin{abstract}
A control problem with final overdetermination is considered for the higher order nonlinear Schr\"{o}dinger equation on a bounded interval. The boundary condition on the space derivative is chosen as the control. Results on global existence of solutions under small input date are established.
\end{abstract}

\section{Introduction}\label{S1}

In this paper the higher order nonlinear Schr\"odinger equation (HNLS)
\begin{equation}\label{1}
i u_t  + a u_{xx} + i b u_x + i u_{xxx} + \lambda |u|^{p_0} u +
 i \beta \bigl( |u|^{p_1} u\bigr)_x + i \gamma \bigl( |u|^{p_1}\bigr)_x u= f(t,x),
\end{equation}
posed on an interval $I = (0,R)$, is considered. Here $a$, $b$, $\lambda$, $\beta$, $\gamma$ are real constants, $p_0, p_1\geq 1$, $u=u(t,x)$ and $f$ are complex-valued functions (as well as all other functions below, unless otherwise stated).  

For an arbitrary $T>0$ in a rectangle $Q_T = (0,T)\times I$ consider an initial-boundary value problem for equation \eqref{1} with an initial condition
\begin{equation}\label{2}
u(0,x) = u_0 (x),\quad x\in [0,R],
\end{equation}
and boundary conditions
\begin{equation}\label{3}
u(t,0) = \mu(t), \quad u(t,R) = \nu(t), \quad u_x(t,R) = h(t) ,\quad t\in [0,T],
\end{equation}
where the function $h$ is unknown and must be chosen such, that the corresponding solution of problem \eqref{1}--\eqref{3} satisfies the condition of terminal overdetermination
\begin{equation}\label{4}
u(T,x) = u_T (x),\quad x\in [0,R],
\end{equation}
for given function $u_T$.

Equation \eqref{1} is a generalized combination of the nonlinear Schr\"{o}dinger equation (NLS)
$$
i u_t + a u_{xx} + \lambda |u|^p u = 0
$$
and the Korteweg--de~Vries equation (KdV)
$$
u_t  + b u_x + u_{xxx} + u u_x =0.
$$
It has various physical applications, in particular, it models propagation of femtosecond optical pulses in a monomode optical fiber, accounting for additional effects such as third order dispersion, self-steeping of the pulse, and self-frequency shift (see \cite{Fib, HK, Kod, KC} and the references therein).

The first result on boundary controllability for the KdV equation on a boundary interval appeared in the pioneer paper by L.~Rosier \cite{Rosier}. In the case $b=1$, initial condition \eqref{2} and boundary conditions \eqref{3} for $\mu = \nu \equiv 0$ it was proved that under small $u_0, u_T \in L_2(0,R)$ there existed a solution under the restriction on the length of the interval
$$
R \ne 2\pi \sqrt{\frac{k^2 + kl +l^2}3}, \quad \forall k, l \in \mathbb N.
$$ 
In paper \cite{CPV} this result was extended to the truncated HNLS equation with cubic nonlinearity
$$
i u_t  + a u_{xx} + i b u_x + i u_{xxx} + |u|^{2} u = 0
$$
again under homogeneous boundary conditions \eqref{3}, under restriction on the length of the interval
\begin{equation}\label{5}
R \ne 2\pi \sqrt{\frac{k^2 + kl +l^2}{3b + a^2}}, \quad  \forall k, l \in \mathbb N,
\end{equation} 
and under the conditions $b>0$, $|a|<3$ (in fact, the equation considered in \cite{CPV} there was a positive coefficient before the third derivative, but it could be easily eliminated by the scaling with respect to $t$, which is possible since the time interval was arbitrary). The argument repeated the one from \cite{Rosier}.

In the present paper the same result is established for the general HNLS equation \eqref{1} under non-homogeneous boundary conditions \eqref{3} and without any conditions on the coefficients $a$ and $b$.

Note that in the recent paper \cite{FM} the inverse initial-boundary value problem \eqref{1}--\eqref{3} was considered with an integral overdetermination
$$
\int_0^R u(t,x) \omega(x) \,dx = \varphi(t), \quad t\in [0,T],
$$
for given functions $\omega$ and $\varphi$. Either boundary function $h$ or the function $F$ in the right-hand side $f(t,x) = F(t)g(t,x)$ for given function $g$ were chosen as controls. Results on well-posedness under either small input data or small time interval were established.

In \cite{CCFSV} a direct  initial-boundary value problem on a bounded interval with homogeneous boundary conditions \eqref{3} for equation \eqref{1} in the case $p_0=p_1=p$ was studied. For $p\in [1,2]$ and the initial function $u_0 \in H^s(I)$, $0\leq s \leq 3$, results on global existence and uniqueness of mild solutions were obtained. For $u_0\in L_2(I)$ the result on global existence was extended either to $p\in (2,3)$ or $p\in (2,4)$, $\gamma=0$. Non-homogeneous boundary conditions were considered in \cite{FL} in the real case and nonlinearity $uu_x$. Note also that in \cite{FM} there is a brief survey of other results concerning the direct initial-boundary value problems for equation \eqref{1}.

\bigskip

Solutions of the considered problems are constructed in a special functional space
$$
X(Q_T) = C([0,T]; L_2(I)) \cap L_2(0,T; H^1(I)),
$$
endowed with the norm
$$
\|u\|_{X(Q_T)} = \sup\limits_{t\in (0,T)} \|u(t,\cdot)\|_{L_2(I)} + \|u_x\|_{L_2(Q_T)}.
$$
For $r>0$ denote by $\overline X_r(Q_T)$ the closed ball $\{u\in X(Q_T): \|u\|_{X(Q_T)} \leq r\}$.

Introduce the notion of a weak solution of problem \eqref{1}--\eqref{3}

\begin{definition}\label{D1}
Let $u_0 \in L_2(I)$, $\mu, \nu, h \in L_2(0,T)$, $f\in L_1(Q_T)$. A function $u\in X(Q_T)$ is called a weak solution of problem \eqref{1}--\eqref{3} if $u(t,0) \equiv \mu(t)$, $u(t,R) \equiv \nu(t)$, and for all test functions $\phi(t,x)$, such that $\phi\in C^1([0,T]; L_2(I)) \cap C([0,T];(H^{3}\cap H^1_0)(I))$,  $\phi\big|_{t=T} \equiv 0$, $\phi_x\big|_{x=0} \equiv 0$, the functions
$|u|^{p_0} u, |u|^{p_1} u, |u|^{p_1} u_x \in L_1(Q_T)$, and the following integral identity is verified:
\begin{multline}\label{6}
\iint_{Q_T} \Bigl[ i u\phi_t  + a u_x \phi_x + i b u \phi_x - i u_x \phi_{xx} -
\lambda |u|^{p_0} u \phi + i \beta |u|^{p_1} u \phi_x + i \gamma |u|^{p_1} (u\phi)_x  \\ +f\phi \Bigr]\, dxdt  +
i \int_0^R u_0 \phi\big|_{t=0} \,dx + i \int_0^T h \phi_x\big|_{x=R} \,dt =0.
\end{multline}
\end{definition}

\begin{remark}\label{R1}
Note that $\phi, \phi_x \in C(\overline Q_T)$, $\phi_x \in C(\overline I; L_2(0,T))$, therefore, the integrals in \eqref{6} exist.
\end{remark}

To describe  the properties of the boundary data $\mu$ and $\nu$ introduce the fractional-order Sobolev spaces. Let 
$\widehat f(\xi)\equiv \mathcal F[f](\xi)$ and $\mathcal F^{-1}[f](\xi)$ be
the direct and inverse Fourier transforms of a function $f$ respectively. In particular, for
$f\in \mathcal S(\mathbb R)$
$$
\widehat f(\xi)=\int_{\mathbb R} e^{-i\xi x} f(x)\,dx,\qquad \mathcal F^{-1}[f](x)=\frac 1 {2\pi} \int_{\mathbb R} e^{i\xi x} f(\xi)\,d\xi3.
$$
For $s\in \mathbb R$ define the fractional-order Sobolev space
$$
H^s(\mathbb R)= \bigl\{f: \mathcal F^{-1}[(1+|\xi|^s)\widehat f(\xi)] \in L_2(\mathbb R)\bigr\}
$$
and for certain $T>0$ let $H^s(0,T)$ be a space of restrictions on $(0,T)$ of functions from $H^s(\mathbb R)$. 

Now we can pass to the main result of the paper.

\begin{theorem}\label{T1}
Let $p_0\in [1,4]$, $p_1\in [1,2]$, $u_0, u_T \in L_2(I)$, $\mu, \nu \in H^{1/3}(0,T)$, $f\in L_1(0,T;L_2(I))$. Assume also that if $3b + a^2 > 0$ condition \eqref{5} is satisfied.
Denote
\begin{equation}\label{7}
c_0 = \|u_0\|_{L_2(I)} + \|u_T\|_{L_2(I)} + \|\mu\|_{H^{1/3}(0,T)} + \|\nu\|_{H^{1/3}(0,T)}+ \|f\|_{L_1(0,T;L_2(I))}.
\end{equation}
Then there exists $\delta>0$ such that under the assumption $c_0\leq \delta$ there exists a function $h\in L_2(0,T)$ and the corresponding unique solution of problem \eqref{1}--\eqref{3} $u\in X(Q_T)$ verifying condition \eqref{4}.
\end{theorem}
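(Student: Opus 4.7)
The strategy is the classical Rosier/HUM approach, adapted to the full HNLS equation \eqref{1}: reduce boundary controllability with terminal overdetermination to an observability inequality for the adjoint of the linearized operator, then close the nonlinear argument by a Banach fixed point in $\overline X_r(Q_T)$ for $r\sim c_0$ small. The first ingredient is linear well-posedness. I would prove that for any $u_0\in L_2(I)$, $\mu,\nu\in H^{1/3}(0,T)$, $h\in L_2(0,T)$ and $f\in L_1(0,T;L_2(I))$ the linear problem obtained from \eqref{1} by dropping the nonlinear terms admits a unique weak solution $u\in X(Q_T)$ satisfying
\[
\|u\|_{X(Q_T)}\le C\bigl(\|u_0\|_{L_2(I)}+\|\mu\|_{H^{1/3}(0,T)}+\|\nu\|_{H^{1/3}(0,T)}+\|h\|_{L_2(0,T)}+\|f\|_{L_1(0,T;L_2(I))}\bigr).
\]
The exponent $1/3$ on the boundary traces is the natural scale dictated by the Kato smoothing of the Airy operator, and the hidden regularity $u_x\in L_2(Q_T)$ furnishes the second summand of the $X(Q_T)$-norm; this step can be set up along the lines of \cite{FM}.

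Next I would turn to linear controllability. Decomposing the linear solution as $u=u_1+u_2$, where $u_1$ carries $u_0,\mu,\nu,f$ with $h\equiv 0$ and $u_2$ has homogeneous data driven only by $h$, the problem reduces to steering $u_2(T,\cdot)$ to the residual target $u_T-u_1(T,\cdot)\in L_2(I)$. By the Hilbert uniqueness method this is equivalent to an observability inequality of the form
\[
\|\varphi(0,\cdot)\|_{L_2(I)}^{2}\le C\int_0^T|\varphi_x(t,R)|^{2}\,dt
\]
for solutions of the backward adjoint homogeneous problem $-i\varphi_t+a\varphi_{xx}-ib\varphi_x-i\varphi_{xxx}=0$ with the dual boundary conditions $\varphi(t,0)=\varphi(t,R)=0$, $\varphi_x(t,0)=0$. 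Following \cite{Rosier,CPV}, I would prove it by contradiction: the negation produces a nontrivial solution of an overdetermined spectral problem on $I$; a Paley--Wiener-type argument then reduces matters to an algebraic identity involving the three complex roots of the cubic symbol of the adjoint spatial operator, and analysis of that cubic produces the exceptional set \eqref{5} precisely when $3b+a^{2}>0$. When $3b+a^{2}\le 0$ one checks directly that no such exceptional eigenpair exists, so no length restriction is imposed.

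For the nonlinear closure I would set, for $v\in\overline X_r(Q_T)$,
\[
\mathcal N(v)=\lambda|v|^{p_0}v+i\beta(|v|^{p_1}v)_x+i\gamma(|v|^{p_1})_x v,
\]
and define $\Phi(v)$ as the solution of the linear control problem (produced by Steps~1 and~2) with source $f-\mathcal N(v)$ and with unchanged $u_0,\mu,\nu,u_T$. The one-dimensional Gagliardo--Nirenberg inequality
\[
\|v\|_{L_\infty(I)}\le C\|v\|_{L_2(I)}^{1/2}\|v_x\|_{L_2(I)}^{1/2}+C\|v\|_{L_2(I)}
\]
combined with the definition of $X(Q_T)$ yields multiplicative estimates for $\mathcal N(v)$ of shape $C\|v\|_{X(Q_T)}^{p_0+1}+C\|v\|_{X(Q_T)}^{p_1+1}$ in the dual norms compatible with the linear estimate (the undifferentiated nonlinearity entering through $L_1(0,T;L_2(I))$, the derivative nonlinearities entering after integration by parts as in \eqref{6}). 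The admissible ranges $p_0\in[1,4]$ and $p_1\in[1,2]$ are exactly those for which these estimates are available. Analogous Lipschitz bounds for $\mathcal N(v)-\mathcal N(w)$ then make $\Phi$ a strict contraction on $\overline X_r(Q_T)$ provided $r\sim c_0$ and $c_0\le\delta$ are sufficiently small; its fixed point is the required solution, and uniqueness follows from the same Lipschitz bounds.

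The principal technical obstacle is the observability inequality and the precise identification of the exceptional lengths \eqref{5}: it requires a delicate spectral and Paley--Wiener analysis of the cubic ODE attached to the adjoint operator, with $a,b$ entering nontrivially into the denominator of the critical-length formula. The lifting of inhomogeneous boundary data in the $H^{1/3}$-scale and the nonlinear closure, although requiring careful bookkeeping of the $X(Q_T)$-norm and of the admissible exponents, then follow standard lines once the observability is in hand.
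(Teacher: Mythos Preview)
Your outline matches the paper's argument closely: linear well-posedness in $X(Q_T)$ with $H^{1/3}$ boundary traces (the paper's Lemma~2.6, citing \cite{FM}); observability for the backward adjoint problem obtained by a compactness--uniqueness reduction to an overdetermined eigenvalue problem and a Fourier/Paley--Wiener analysis of the cubic symbol yielding exactly the critical lengths \eqref{5} when $3b+a^2>0$ (Lemmas~2.3--2.5); the HUM construction of a bounded right inverse $\Gamma:L_2(I)\to L_2(0,T)$ for the linear control (Theorem~2.2); multilinear estimates via the interpolation inequality \eqref{8} placing $|v|^{p_0}v$ in $L_1(0,T;L_2(I))$ for $p_0\le 4$ and handling the derivative nonlinearities as an $f_0-f_{1x}$ splitting with $f_1\in L_2(Q_T)$ for $p_1\le 2$ (Lemmas~3.1--3.3); and a contraction on $\overline X_r(Q_T)$ with $r\sim c_0$.

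One point needs tightening. The theorem asserts uniqueness of the solution of the \emph{direct} problem \eqref{1}--\eqref{3} in the whole space $X(Q_T)$, not only in the small ball. The contraction/Lipschitz bounds you invoke give uniqueness only inside $\overline X_r(Q_T)$, since the Lipschitz constant depends on $\|v_j\|_{X(Q_T)}^{p_k}$. The paper closes this gap by a separate argument (its Theorem~3.4): for two solutions $u,\widetilde u\in X(Q_T)$ one derives the weighted energy identity with $\rho(x)=1+x$ for $w=u-\widetilde u$, estimates the nonlinear differences pointwise in $t$ using \eqref{8}, and observes that the exponent restrictions $p_0\le 4$, $p_1\le 2$ make the resulting coefficient function lie in $L_1(0,T)$ (since $u,\widetilde u\in L_4(0,T;L_\infty(I))$), so Gronwall gives $w\equiv 0$ without any smallness assumption.
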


\begin{remark}\label{R2}
The smoothness assumption $\mu,\nu \in H^{1/3}(0,T)$ on the boundary data is natural, since if one considers the initial value problem 
$$
v_t + v_{xxx} =0, \quad v\big|_{t=0} = v_0(x) \in L_2(\mathbb R),
$$
then, by \cite{KPV}, its solution $v\in C(\mathbb R; L_2(\mathbb R)$ (which can be constructed via the Fourier transform) satisfies the following relations for any $x\in \mathbb R$
$$
\|D^{1/3}_t v(\cdot,x)\|_{L_2(\mathbb R)} = \|v_x(\cdot,x)\|_{L_2(\mathbb R)} = c\|v_0\|_{L_2(\mathbb R)}.
$$
\end{remark}

\bigskip 
Further we use the following simple interpolating inequality: there exists a constant $c=c(R,q)$ such that for any $\varphi \in H^1(I)$ 
\begin{equation}\label{8}
\|\varphi\|_{L_\infty(I)} \leq c\|\varphi'\|_{L_2(I)}^{1/2} \|\varphi\|_{L_2(I)}^{1/2} +c\|\varphi\|_{L_2(I)},
\end{equation}
where the second term in the right-hand side is absent if $\varphi \in H_0^1(I)$.

The paper is organized as follows. In Section~\ref{S2} results on the corresponding linear problem are presented, Section~\ref{S3} contains the proof of nonlinear results.

\section{Auxiliary linear problem}\label{S2}

Besides the nonlinear problem consider its linear analogue and start with the following one with homogeneous boundary conditions
\begin{gather}\label{9}
i u_t  + a u_{xx} + i b u_x + i u_{xxx}  = f(t,x), \\
\label{10}
u\big|_{t=0} = u_0(x), \quad u\big|_{x=0} = u\big|_{x=R} = u_x\big|_{x=R} =0. 
\end{gather}
Define an operator
$$
A:D(A) \rightarrow L_2(I),\quad
y \mapsto A(y) =-y'''+i a y'' - b y'
$$
with the domain $D(A) = \{y\in H^3(I): y(0) = y(R) = y'(R)=0\}$.

\begin{lemma}\label{L1}
The operator $A$ generates a continuous semi-group of contractions $\left\{e^{tA}, t\geq 0 \right\}$ in $L_2(I)$.
\end{lemma}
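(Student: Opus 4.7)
The plan is to apply the Lumer--Phillips theorem in its symmetric form: a densely defined, closed operator $A$ on a Hilbert space generates a $C_0$-semigroup of contractions if and only if both $A$ and its adjoint $A^\ast$ are dissipative. Density of $D(A)$ is immediate from $C_c^\infty(I)\subset D(A)$, and $A$ is closed because its coefficients are constant and the boundary traces are closed in $H^3(I)$. The calculation is modelled on Rosier's argument \cite{Rosier} for the pure KdV operator; what must be checked anew is that the lower-order perturbations $iay''$ and $-by'$ produce no real contribution to the energy identity.

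For dissipativity of $A$, take $y\in D(A)$ and compute $\Re(Ay,y)_{L_2(I)}$ by integration by parts, using $y(0)=y(R)=y'(R)=0$. The top-order term $-\int y'''\overline y\,dx$ reduces to $\int y''\overline{y'}\,dx$, and symmetrising yields
$$
2\Re\int_0^R y''\overline{y'}\,dx=\int_0^R\bigl(|y'|^2\bigr)'\,dx=|y'(R)|^2-|y'(0)|^2=-|y'(0)|^2.
$$
The term $ia\int y''\overline y\,dx$ reduces after one integration by parts to $-ia\int_0^R|y'|^2\,dx$, which is purely imaginary; and $-b\int y'\overline y\,dx$ has real part $-\tfrac b2\int_0^R(|y|^2)'\,dx=0$. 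Altogether $\Re(Ay,y)_{L_2(I)}=-\tfrac12|y'(0)|^2\le0$.

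Integration by parts in $(Ay,z)_{L_2(I)}$ identifies the formal adjoint as $A^\ast z=z'''-iaz''+bz'$ on $D(A^\ast)=\{z\in H^3(I):z(0)=z(R)=z'(0)=0\}$; the main point of bookkeeping is that the condition $y'(R)=0$ appearing in $D(A)$ is exchanged for $z'(0)=0$ in $D(A^\ast)$, so the boundary contribution from $-y'''$ balances against the contribution from $z'''$ at the opposite endpoint. An entirely symmetric calculation then gives $\Re(A^\ast z,z)_{L_2(I)}=-\tfrac12|z'(R)|^2\le0$, so both $A$ and $A^\ast$ are dissipative and Lumer--Phillips applies. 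As an alternative route, once $A$ is known to be dissipative one can verify surjectivity of $\lambda I-A$ directly: the constant-coefficient ODE $\lambda y+y'''-iay''+by'=f$ on $(0,R)$ with the three boundary conditions is a Fredholm problem of index zero, and the dissipative estimate yields injectivity, hence surjectivity. There is no substantive analytic obstacle in either version; only the asymmetry between the boundary conditions defining $D(A)$ and $D(A^\ast)$, which localises the dissipation at opposite endpoints of $I$, must be tracked carefully.
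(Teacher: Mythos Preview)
Your proof is correct and follows essentially the same approach as the paper: both verify dissipativity of $A$ and of the adjoint $A^\ast$ (with the boundary condition $y'(0)=0$ replacing $y'(R)=0$) via integration by parts, obtaining $\Re(Ay,y)=-\tfrac12|y'(0)|^2$ and $\Re(A^\ast z,z)=-\tfrac12|z'(R)|^2$, and then invoke the Lumer--Phillips theorem. The paper's computation of the third-order term is organised slightly differently (three integrations by parts to reach $-|y'(0)|^2+\int_0^R y\bar y'''\,dx$ and then symmetrise), but the content is identical; your explicit remark that the $ia y''$ and $-by'$ terms contribute nothing real is precisely the point the paper emphasises as removing the restriction $|a|<3$ from \cite{CPV}.
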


\begin{proof}
This assertion is proved in \cite[Lemma~4.1]{CPV} but under the restriction $|a|<3$. However, the slight correction of that proof provides the desired result. In fact, the operator $A$ is obviously closed. Next, for $y\in D(A)$
$$
(Ay, y)_{L_2(I)} = \int_0^R (-y'''+i a y'' - b y')\bar y \,dx.
$$
Here,
\begin{gather*}
-\int_0^R y''' \bar y \, dx = - |y'(0)|^2 + \int_0^R y \bar y''' \,dx, \\
i\int_0^R y'' \bar y \,dx = - i\int_0^R |y'|^2 \,dx, \\
-\int_0^R y' \bar y \, dx = \int_0^R y \bar y' \,dx,
\end{gather*}
therefore,
$$
\Re (Ay,y)_{L_2(I)} = - \frac12 |y'(0)|^2 \leq 0
$$
and so the operator $A$ is dissipative. Next, the operator $A^* y = y''' - i a y'' +b y'$ with the domain $D(A^*) = \{y\in H^3(I): y(0) = y'(0) = y(R)=0\}$ and similarly for $y\in D(A^*)$
$$
\Re (A^*y,y)_{L_2(I)} = - \frac12 |y'(R)|^2 \leq 0.
$$
Therefore, the operator $A^*$ is also dissipative. Application of the Lumer--Phillips theorem  (see \cite{Pazy}) finishes the proof.
\end{proof}

\begin{remark}\label{R3}
Note that the weak solution of problem \eqref{9}, \eqref{10} can be considered in the space $L_1(0,T;L_2(I))$ in the sense of an integral identity
$$
\iint_{Q_T} u (i\phi_t  - a \phi_{xx} + i b \phi_x + i \phi_{xxx}) \, dxdt  + 
\iint_{Q_T} f \phi \,dxdt +
i \int_0^R u_0 \phi\big|_{t=0} \,dx =0,
$$
valid for any test function from Definition~\ref{D1}. Then the general theory of semi-groups (see \cite{Pazy}) provides that for $u_0 \in L_2(I)$, $f\in L_1(0,T;L_2(I))$ there exists a weak solution $u\in C([0,T]; L_2(I))$ of problem \eqref{9}, \eqref{10}, 
$$
u(t,\cdot) = e^{tA} u_0 + \int_0^t e^{(t - \tau) A} f(\tau,\cdot) \,d\tau,
$$
\begin{equation}\label{11}
\|u\|_{C([0,T]; L_2(I))} \leq \|u_0\|_{L_2(I)} + \|f\|_{L_1(0,T;L_2(I))},
\end{equation}
which is unique in $L_1(0,T;L_2(I))$. Moreover, for $u_0 \in D(A)$, $f\in C^1([0,T]; L_2(I))$ this solution is regular, that is, $u\in C^1([0,T];L_2(I)) \cap C([0,T];D(A))$. 
\end{remark}

\begin{lemma}\label{L2}
Let $u_0\in L_2(I)$, $f\equiv f_0 - f_{1x}$, where $f_0\in L_1(0,T;L_2(I))$, $f_1\in L_2(Q_T)$. Then there exist a unique weak solution to problem \eqref{9}, \eqref{10} $u\in X(Q_T)$ and a function $\theta\in L_2(0,T)$, such that for certain constant $c=c(T)$, non-decreasing with respect to $T$,
\begin{equation}\label{12}
\|u\|_{X(Q_T)} + \|\theta\|_{L_2(0,T)} \leq c\bigl(\|u_0\|_{L_2(I)} + 
\|f_0\|_{L_1(0,T;L_2(I))} +\|f_1\|_{L_2(Q_T)}\bigr),
\end{equation}
and for a.e. $t\in (0,T)$
\begin{multline}\label{13}
\frac{d}{dt}\int_0^R |u(t,x)|^2\rho(x)\,dx + |\theta(t)|^2 +
3\int_0^R |u_x|^2\rho' \,dx  \\
= b \int_0^R |u|^2\rho' \,dx   
+2a \Im \int_0^R u_x \bar{u}\rho' \,dx 
+ 2\Im \int_0^R f_0\bar{u}\rho \,dx 
+ 2 \Im \int_0^R f_1 (\bar{u}\rho)_x \,dx, 
\end{multline}
where either $\rho(x)\equiv 1$ or $\rho(x)\equiv 1+x$. Moreover, if $u_0 \in D(A)$ and $f\in C^1([0,T];L_2(I))$, then $\theta \equiv u_x\big|_{x=0}$.
\end{lemma}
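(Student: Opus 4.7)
The plan is to prove identity~\eqref{13} and estimate~\eqref{12} first for regular data (where $u_x|_{x=0}$ makes classical sense) and then extend by density. In the regular case I take $u_0 \in D(A)$ and $f_0, f_1$ smooth enough so that $f = f_0 - f_{1x} \in C^1([0,T]; L_2(I))$; Remark~\ref{R3} then supplies a classical solution $u \in C^1([0,T]; L_2(I)) \cap C([0,T]; D(A))$, and I set $\theta(t) := u_x(t, 0)$.

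To derive~\eqref{13}, I multiply equation~\eqref{9} by $2\bar u\rho$, integrate over $I$, and take imaginary parts. The $iu_t$ term yields $\frac{d}{dt}\int |u|^2 \rho$; one integration by parts on $au_{xx}$ (the boundary term vanishing by $u(t,0) = u(t,R) = 0$) contributes $+2a\Im\int u_x\bar u \rho'$; the $ibu_x$ term contributes $+b\int |u|^2\rho'$; and two integrations by parts on $iu_{xxx}$, exploiting all three boundary conditions $u(t,0) = u(t,R) = u_x(t,R) = 0$, produce $\rho(0)|\theta|^2 + 3\int |u_x|^2 \rho'$ (the $\rho''$-terms are absent for both $\rho \equiv 1$ and $\rho \equiv 1+x$; $\rho(0) = 1$ in both cases). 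The source $-f_{1x}$ yields $+2\Im\int f_1(\bar u \rho)_x$ after one integration by parts. For estimate~\eqref{12}, I integrate~\eqref{13} with $\rho(x) = 1+x$ over $(0,t)$: the term $2a\Im\int u_x\bar u$ is bounded via Young's inequality with a small parameter to be absorbed into the $3\int |u_x|^2$ on the left; splitting $f_1(\bar u \rho)_x = f_1\bar u_x(1+x) + f_1\bar u$, the derivative piece is treated analogously and the remainder by Cauchy--Schwarz; the term $\int f_0\bar u\rho$ is bounded by $\|f_0(t)\|_{L_2}\|u(t)\|_{L_2}$, whose time integral is controlled by $\|f_0\|_{L_1(0,T;L_2(I))} \cdot \sup_t \|u(t)\|_{L_2}$ (again with a Young absorption). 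A Gronwall argument on $\|u(t)\|_{L_2}^2$ then closes the estimate, with a constant $c(T)$ non-decreasing in $T$.

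For the general case, I approximate $u_0, f_0, f_1$ by regular sequences in their respective norms. By linearity and~\eqref{12} applied to differences, $\{u^n\}$ is Cauchy in $X(Q_T)$ and $\{\theta^n\}$ is Cauchy in $L_2(0,T)$; the limits $u, \theta$ inherit~\eqref{12}. The time-integrated form of~\eqref{13} passes to the limit by strong convergence of every term (including the quadratic pieces $|\theta|^2$ and $|u_x|^2$, since $\theta^n \to \theta$ in $L_2(0,T)$ and $u^n_x \to u_x$ in $L_2(Q_T)$ strongly); differentiating in $t$ recovers~\eqref{13} almost everywhere. Uniqueness of $u$ follows from Remark~\ref{R3} (after rephrasing the weak identity with $f = f_0 - f_{1x}$ via integration by parts on the test function), and uniqueness of $\theta$ then follows from~\eqref{13} applied to the difference of two solutions, which forces $|\theta|^2 \equiv 0$. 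The main obstacle is precisely that $u_x|_{x=0}$ is not a priori meaningful for $u \in X(Q_T)$: the substance of the lemma is the existence and $L_2$-control of this ``hidden'' trace, obtained only through the limit procedure from smooth approximations.
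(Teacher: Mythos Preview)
Your proposal is correct and follows essentially the same approach as the paper: derive the weighted identity for regular solutions via the multiplier $2\bar u\rho$, deduce estimate~\eqref{12} from the case $\rho(x)=1+x$ (Young-type absorption of the $u_x$-terms followed by Gronwall), and extend to general data by density, obtaining $\theta$ as the $L_2(0,T)$-limit of the traces $u^n_x(\cdot,0)$. The paper's proof is simply a terser version of what you wrote; in particular it records the same two pointwise bounds on $2a\int u_x\bar u$ and $2\int f_1(\bar u\rho)_x$ and then invokes closure without spelling out the Cauchy-sequence argument.
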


\begin{proof}
First, consider regular solutions in the case $ u_0\in D(A)$, $f\in C^1([0,T];L_2(I))$. Then multiplying equality \eqref{9} by $2\bar{u}(t,x)\rho(x)$, extracting the imaginary part and integrating one obtains  an equality
\begin{multline}\label{14}
\int_0^R |u(t,x)|^2\rho(x)\,dx + \int_0^t |u_x(\tau,0)|^2\,d\tau +
3\iint_{Q_t} |u_x|^2\rho' \,dx d\tau \\
= \int_0^R |u_0|^2\rho\,dx + b\iint_{Q_t} |u|^2\rho' \,dx d\tau  
+2a \Im \iint_{Q_t} u_x \bar{u}\rho' \,dx d\tau \\
+ 2\Im \iint_{Q_t} f_0\bar{u}\rho \,dx d\tau
+ 2 \Im \iint_{Q_t} f_1 (\bar{u}\rho)_x \,dx d\tau.
\end{multline}
Choose $\rho(x)\equiv 1+x$, then
$$
\Bigl| 2a\int_0^R u_{x}\bar{u}\,dx \Bigr|\leq 
a^2 \int_0^R |u|^2\,dx + \int_0^R |u_{x}|^2\,dx,
$$
$$	
\Bigl| 2\int_0^R f_1 (\bar{u}\rho)_x \,dx \Bigr|\leq 
\bigl((1+R)^2+1\bigr)\int_0^R |f_1|^2\,dx+\int |u_{x}|^2\,dx + \int_0^R |u|^2\,dx,
$$
and equality \eqref{14} provides estimate \eqref{12} in the regular case.
This estimate gives an opportunity to establish existence of a weak solution with property \eqref{12} in the general case via closure. Moreover, equality \eqref{14} is also verified. In particular, this equality implies that the function 
$\|u(t,\cdot)\rho^{1/2}\|_{L_2(I)}^2$ is absolutely continuous on $[0,T]$ and then \eqref{13} follows.
\end{proof}

\begin{corollary}\label{C1}
There exists a linear bounded operator $P: L_2(I) \rightarrow L_2(0,T)$ such that for any $u_0\in L_2(I)$
\begin{equation}\label{15}
\|Pu_0\|_{L_2(0,T)} \leq \|u_0\|_{L_2(I)},
\end{equation}
for the corresponding weak solution $u\in X(Q_T)$ of problem \eqref{9}, \eqref{10} in the case $f_0=f_1 \equiv 0$,
\begin{equation}\label{16}
\|u_0\|^2_{L_2(I)} \leq  \frac1T \|u\|^2_{L_2(Q_T)} + 
\|Pu_0\|^2_{L_2(0,T)},
\end{equation}
and $Pu_0 = u_x\big|_{x=0}$ if $u_0 \in D(A)$.
\end{corollary}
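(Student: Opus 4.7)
The plan is to construct $P$ directly from Lemma~\ref{L2}. Specifically, for $u_0 \in L_2(I)$, let $u\in X(Q_T)$ be the weak solution of \eqref{9}, \eqref{10} with $f_0=f_1\equiv 0$, and let $\theta\in L_2(0,T)$ be the function supplied by that lemma; set $Pu_0 := \theta$. The identification $Pu_0 = u_x\big|_{x=0}$ for $u_0\in D(A)$ is then immediate from the last assertion of Lemma~\ref{L2}, so the only remaining tasks are to verify linearity and boundedness of $P$ and to establish the reverse inequality \eqref{16}.

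The key computation is to specialize the identity \eqref{13} to the choice $\rho(x)\equiv 1$, $f_0=f_1\equiv 0$, which annihilates every term on the right-hand side and yields
$$
\frac{d}{dt}\|u(t,\cdot)\|^2_{L_2(I)} + |\theta(t)|^2 = 0 \quad \text{for a.e. } t\in(0,T).
$$
Integrating on $[0,T]$ gives the energy identity $\|u(T,\cdot)\|^2_{L_2(I)} + \|\theta\|^2_{L_2(0,T)} = \|u_0\|^2_{L_2(I)}$, from which \eqref{15} follows at once by dropping the nonnegative boundary term.

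For the observability bound \eqref{16} I would exploit the fact that the same identity shows $t\mapsto \|u(t,\cdot)\|^2_{L_2(I)}$ is absolutely continuous and non-increasing on $[0,T]$. Consequently $\|u(t,\cdot)\|^2_{L_2(I)} \geq \|u(T,\cdot)\|^2_{L_2(I)}$ for every $t\in[0,T]$, so integrating in $t$ yields $\|u\|^2_{L_2(Q_T)} \geq T\|u(T,\cdot)\|^2_{L_2(I)}$. Substituting $\|u(T,\cdot)\|^2_{L_2(I)} = \|u_0\|^2_{L_2(I)} - \|\theta\|^2_{L_2(0,T)}$ and rearranging reproduces \eqref{16} exactly.

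The only point that requires genuine care, and thus the main obstacle here, is ensuring that $P$ is a well-defined linear operator rather than an ambiguous assignment. I would handle this by working first in the regular case $u_0\in D(A)$, where $\theta = u_x\big|_{x=0}$ depends linearly on $u_0$ and already obeys \eqref{15} by the computation above. Since $D(A)$ is dense in $L_2(I)$, this bounded linear map extends uniquely to all of $L_2(I)$, and by the closure construction in the proof of Lemma~\ref{L2} the extension must coincide with the $\theta$ supplied there. This settles linearity, \eqref{15}, and the coincidence on $D(A)$ simultaneously, and the calculations above then deliver \eqref{16}.
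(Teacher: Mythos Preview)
Your argument is correct and follows essentially the same route as the paper: both proofs specialize the differential identity \eqref{13} to $\rho\equiv 1$, $f_0=f_1\equiv 0$, obtaining $\frac{d}{dt}\|u(t,\cdot)\|_{L_2(I)}^2 + |\theta(t)|^2 = 0$, and both define $Pu_0:=\theta$. The only visible difference is in the manipulation that yields \eqref{16}: the paper multiplies \eqref{13} by $(T-t)$, integrates over $[0,T]$, and obtains the exact identity
$$
\|u\|_{L_2(Q_T)}^2 - T\|u_0\|_{L_2(I)}^2 + \int_0^T (T-t)|\theta(t)|^2\,dt = 0,
$$
whereas you integrate \eqref{13} on $[0,T]$ to get the energy balance $\|u(T,\cdot)\|^2 + \|\theta\|^2 = \|u_0\|^2$ and then combine it with the monotonicity $\|u(t,\cdot)\|^2 \geq \|u(T,\cdot)\|^2$. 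Both derivations are one-line consequences of the same ODE and lead to the same inequality, so this is a cosmetic rather than substantive difference. Your explicit discussion of the linearity and well-definedness of $P$ via density of $D(A)$ is a useful addition that the paper leaves implicit.
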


\begin{proof}
In the case $|a| <3$ this assertion was proved in \cite[Lemma~4.2]{CPV}.  
Choosing in \eqref{13} $\rho(x) \equiv 1$ we obtain estimate \eqref{15} for $Pu_0 \equiv \theta$. Next, again for $\rho(x) \equiv 1$,
multiplying equality \eqref{13} by $(T-t)$ and integrating with respect to $t$, we derive an equality
$$
\iint_{Q_T} |u|^2 \,dxdt - T \int_0^R |u_0|^2 \,dx  + \int_0^T (T-t) 
|\theta(t)|^2 \,dt =0,
$$
which implies inequality \eqref{16}.
\end{proof}

Three following lemmas are proved in \cite{CPV} in the case $|a|<3$, $b>0$. The proof in the general case is similar, however, we present it here, moreover, in a more transparent way.
The first auxiliary lemma is concerned with the properties of the operator $A$.

\begin{lemma}\label{L3}
Let the function $y\in D(A)$, $y\not\equiv 0$, be the eigenfunction of the operator $A$ and $y'
(0) =0$. Then $3b + a^2 >0$ and $R = 2\pi \sqrt{(k^2 + kl + l^2)/(3b + a^2)}$ for certain natural numbers $k$ and $l$.
\end{lemma}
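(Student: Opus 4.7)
The plan is to combine the dissipativity calculation from the proof of Lemma~\ref{L1} with a direct analysis of the resulting ODE boundary-value problem. Writing $Ay = \lambda y$ and pairing with $y$ in $L_2(I)$, the identity $\Re(Ay,y)_{L_2(I)} = -\tfrac12|y'(0)|^2$ derived there, together with the hypothesis $y'(0) = 0$, forces $\Re\lambda = 0$, so $\lambda = i\tau$ with $\tau\in\mathbb R$. Thus $y$ satisfies $y''' - iay'' + by' + i\tau y = 0$ on $I$ with the four homogeneous conditions $y(0)=y(R)=y'(0)=y'(R)=0$. To make the quantity $a^2+3b$ visible I would eliminate the second-derivative term via the substitution $y(x) = e^{iax/3}z(x)$, which after a short routine calculation transforms the equation into
$$
z''' + \alpha z' + i\mu z = 0,\qquad \alpha = \frac{a^2+3b}{3},\quad \mu = \tau + \frac{ab}{3} + \frac{2a^3}{27},
$$
while preserving all four boundary conditions for $z$.

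Writing the characteristic roots as $r_j = i\eta_j$, the $\eta_j$ are the three roots of $\eta^3 - \alpha\eta - \mu = 0$, so $\eta_1+\eta_2+\eta_3 = 0$ and $\eta_1^2+\eta_2^2+\eta_3^2 = 2\alpha$. Assuming first that the $\eta_j$ are pairwise distinct, the general solution is $z(x) = \sum_j c_j e^{i\eta_j x}$. The two conditions at $x=0$ together with $\sum\eta_j = 0$ force $(c_1,c_2,c_3)$ to be proportional to $(\eta_2-\eta_3,\,\eta_3-\eta_1,\,\eta_1-\eta_2)$; substituting into the two conditions at $x=R$ and using distinctness yields, by elementary linear algebra, $e^{i\eta_1 R} = e^{i\eta_2 R} = e^{i\eta_3 R}$. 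Comparing moduli immediately rules out any complex-conjugate pair of roots (the would-be imaginary part $q$ would have to satisfy $e^{-qR}=e^{qR}$), so all $\eta_j$ are real. Writing $(\eta_1-\eta_2)R = 2\pi k$, $(\eta_2-\eta_3)R = 2\pi l$ with $k,l\in\mathbb Z$ and combining with $\sum\eta_j=0$ produces explicit expressions for the $\eta_j$ and the identity
$$
\alpha \,=\, \tfrac12(\eta_1^2+\eta_2^2+\eta_3^2) \,=\, \frac{4\pi^2}{3R^2}(k^2+kl+l^2),
$$
which rearranges to $(a^2+3b)R^2 = 4\pi^2(k^2+kl+l^2)$. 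Distinctness forces $k$, $l$, $k+l$ to be nonzero, and by replacing with absolute values one may assume $k,l\in\mathbb N$; in particular $a^2+3b>0$ and the claimed expression for $R$ follows.

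The repeated-root case is handled separately: then the general solution carries a polynomial-times-exponential factor, and imposing all four homogeneous boundary conditions produces an overdetermined linear system that forces $\eta = 0$ and $z\equiv 0$, contradicting $y\not\equiv 0$. The main bookkeeping step is the one in the distinct-roots case: one must verify carefully that the two compatibility conditions at $x=R$, after elimination of the $c_j$, are equivalent to the coincidence of the three exponentials $e^{i\eta_j R}$, which uses both $\sum\eta_j=0$ and the non-vanishing of the Vandermonde-type factors $\eta_i-\eta_j$. Beyond this finite-dimensional calculation every step is elementary.
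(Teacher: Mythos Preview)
Your argument is correct and complete in outline; the computations all check (the substitution $y=e^{iax/3}z$ indeed kills the second-order term and produces $\alpha=(a^2+3b)/3$, the two boundary conditions at $x=R$ do collapse to $E_1=E_2=E_3$ via the Vandermonde elimination you describe, and the repeated-root cases yield only the trivial solution). One minor remark: in the double-root case the system is inconsistent for any value of the repeated root, not just $\eta=0$; only the triple-root case forces $\eta=0$. Either way the conclusion $z\equiv0$ stands.

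Your route, however, is genuinely different from the paper's. The paper does not first deduce that $\lambda$ is purely imaginary, nor does it write out the general solution of the ODE. Instead it extends $y$ by zero to all of $\mathbb R$, observes that $y\in H^2(\mathbb R)$ with compact support, and takes the Fourier transform of the distributional identity $\lambda y + y''' - iay'' + by' = y''(0)\,\delta_0 - y''(R)\,\delta_R$ to obtain $\widehat y(\xi) = i\bigl(\varkappa - \sigma e^{-iR\xi}\bigr)/(\xi^3 - a\xi^2 - b\xi + p)$. Since $y$ has compact support, $\widehat y$ extends to an entire function, so the three roots of the cubic denominator must lie among the simple zeros $\xi_0 + 2\pi n/R$ of the numerator; Vieta's formulas for the cubic then give $a^2+3b = 4\pi^2(k^2+kl+l^2)/R^2$ directly. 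This Paley--Wiener argument avoids your case split (simple versus repeated roots) and the preliminary dissipativity step, at the cost of invoking distribution theory. Your approach is more elementary and self-contained, relying only on linear ODE facts, and has the pedagogical advantage of making the role of the four boundary conditions completely transparent.
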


\begin{proof}
Let $\varkappa = y''(0)$, $\sigma = y''(R)$, $Ay = \lambda y$ for certain $\lambda \in \mathbb C$.

Extend the function $y$ by zero outside the segment $[0,R]$, note that $y\in H^2(\mathbb R)$. 
Then in $\EuScript S'(\mathbb R)$
$$
\lambda y + y''' -i a y'' + b y' = \varkappa \delta_0 - \sigma \delta_R, 
$$
where $\delta_{x_0}$ denotes the Dirac measure at the point $x_0$. Applying the Fourier transform we derive an equality
$$
(\lambda - i\xi^3 + i a \xi^2 + i b \xi) \widehat y(\xi) = \varkappa - \sigma e^{-i R \xi},
$$
whence for $p = i\lambda$
$$
\widehat y(\xi) = i\frac{\varkappa - \sigma e^{-i R \xi}}{\xi^3 - a \xi^2 - b \xi + p}.
$$
Since the function $y$ has the compact support, the function $\widehat y$ can be extended to the entire function on $\mathbb C$. Note that $(\varkappa,\sigma) \ne (0,0)$, otherwise $y\equiv 0$. The roots of the function $\varkappa - \sigma e^{-i R \xi}$ are simple and have the form $\xi_0 + 2\pi n /R$ for certain complex number $\xi_0$ and integer number $n$. Then the roots of the function $\xi^3 - a \xi^2 - b \xi + p$ must also be simple and coincide with the roots of the numerator. As a result, for certain complex number $\xi_0$ and natural $k$, $l$ the roots of the denominator can be written in such a form:   
$$
\xi_0,\quad \xi_1 = \xi_0 + k \frac{2\pi}R, \quad  \xi_2 = (k+l) \frac{2\pi}R.
$$
Exploiting the Vieta formulas
$$
\xi_0 + \xi_1 + \xi_2 = a, \quad \xi_0 \xi_1 + \xi_0 \xi_2 + \xi_1 \xi_2 = -b,
$$
we express $\xi_0$ from the first one, substitute it into the second one and derive an equality
$$
a^2 + 3b = (k^2 +kl + l^2) \frac{4\pi^2}{R^2}.
$$
\end{proof}

\begin{remark}\label{R4}
It can be shown, that the restriction on the size of the interval is also sufficient for existence of such eigenfunctions, but this is not used further.
\end{remark}

\begin{lemma}\label{L4}
For $T>0$ let $\mathcal F_T$ denote the space of initial functions $u_0 \in L_2(I)$ such that  $Pu_0=0$ in $L_2(0,T)$. Then  $\mathcal F_T = \{0\}$ for all $T>0$ if $3b+a^2 \leq 0$ or inequality \eqref{5} is satisfied if $3b + a^2 >0$.
\end{lemma}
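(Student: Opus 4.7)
The plan is to argue by contrapositive: assume $\mathcal F_T \neq \{0\}$ for some $T > 0$ and extract a nonzero eigenfunction $y \in D(A)$ of $A$ with $y'(0) = 0$, so that Lemma~\ref{L3} forces $3b+a^2 > 0$ together with the critical length relation, contradicting the hypothesis.

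First I would show $\mathcal F_T$ is finite-dimensional by a compactness argument. Taking a bounded sequence $\{u_0^n\} \subset \mathcal F_T$, Lemma~\ref{L2} gives uniform $X(Q_T)$ bounds for the solutions $u^n$, so in particular $\{u^n\}$ is bounded in $L_2(0,T;H^1(I))$; equation \eqref{9} then bounds $\{u^n_t\}$ in $L_2(0,T;H^{-2}(I))$, and Aubin--Lions yields relative compactness of $\{u^n\}$ in $L_2(Q_T)$. Since $Pu_0^n = 0$, estimate \eqref{16} collapses to $\|u_0^n\|_{L_2(I)}^2 \le (1/T)\|u^n\|_{L_2(Q_T)}^2$, whence $\{u_0^n\}$ is relatively compact in $L_2(I)$ and $\dim \mathcal F_T < \infty$.

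Next, applying \eqref{13} with $\rho \equiv 1$ on $\mathcal F_T$ (where $\theta \equiv 0$) yields $\|e^{sA}u_0\|_{L_2(I)} = \|u_0\|_{L_2(I)}$ for $s \in [0,T]$. Following the Rosier scheme, I would pass to $\mathcal F_\infty := \bigcap_{T'>0}\mathcal F_{T'}$, which is closed, contained in $\mathcal F_T$ (hence finite-dimensional), and $e^{sA}$-invariant for all $s\ge 0$ (since belonging to $\mathcal F_\infty$ amounts to $(e^{\cdot A}u_0)_x|_{x=0}\equiv 0$ on $[0,\infty)$, a property preserved under time translation). On $\mathcal F_\infty$ the restricted semigroup is a $C_0$-semigroup of isometries in finite dimension, hence extends to a $C_0$-group; the generator $A|_{\mathcal F_\infty}$ is therefore a bounded operator on a complex finite-dimensional space and admits a complex eigenvalue $\lambda$ with eigenvector $y \neq 0$. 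Then $y \in D(A)$, $Ay = \lambda y$, and $e^{tA}y = e^{\lambda t}y$, so by the last clause of Corollary~\ref{C1}, $Py(t) = e^{\lambda t}y'(0)$; since $y \in \mathcal F_T$ forces this to vanish on $[0,T]$, we conclude $y'(0) = 0$, and Lemma~\ref{L3} delivers the contradiction.

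The main obstacle I anticipate is the reduction from ``$\mathcal F_T \neq \{0\}$'' to ``$\mathcal F_\infty \neq \{0\}$'', since a priori the nested chain $\mathcal F_T \supseteq \mathcal F_{2T} \supseteq \ldots$ of finite-dimensional spaces could decrease to zero. To close this I would exploit that $e^{TA}$ is an isometric injection $\mathcal F_{(n+1)T}\hookrightarrow\mathcal F_{nT}$ and that $\dim\mathcal F_{nT}$ is a non-increasing integer sequence, hence stabilizes; in the stable range the isometric injection between equidimensional spaces is surjective, which combined with the inclusion $\mathcal F_{(n+1)T}\subseteq\mathcal F_{nT}$ identifies the stable value with $\mathcal F_\infty$ and yields $e^{TA}(\mathcal F_\infty)=\mathcal F_\infty$. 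The remaining point --- that the isometric evolution on each link of the chain propagates nontriviality all the way to $\mathcal F_\infty$ --- is the technical heart of the Rosier-type argument, and is precisely where the detailed structure of the finite-dimensional semigroup reduction must be used.
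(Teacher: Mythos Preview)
Your compactness argument for the finite-dimensionality of $\mathcal F_T$ and the endgame (extracting an eigenfunction $y\in D(A)$ with $y'(0)=0$ and invoking Lemma~\ref{L3}) match the paper's proof. The genuine gap is exactly the one you flag but do not close: the reduction from $\mathcal F_T\ne\{0\}$ to $\mathcal F_\infty\ne\{0\}$. Your proposed chain $\mathcal F_T\supseteq\mathcal F_{2T}\supseteq\cdots$ runs in the wrong direction. Stabilization of $\dim\mathcal F_{nT}$ only tells you $\mathcal F_{NT}=\mathcal F_\infty$ for large $N$; it says nothing about the original $\mathcal F_T$, and the isometry $e^{TA}\colon\mathcal F_{(n+1)T}\hookrightarrow\mathcal F_{nT}$ cannot manufacture nonzero elements of $\mathcal F_{(n+1)T}$ from those of $\mathcal F_{nT}$. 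So even after your eigenfunction argument yields $\mathcal F_\infty=\{0\}$, you only conclude $\mathcal F_{NT}=\{0\}$, not $\mathcal F_T=\{0\}$. The ``remaining point'' you describe is not a technicality to be filled in --- it is simply false that nontriviality propagates up the chain.

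The paper (following Rosier) avoids this by going to \emph{smaller} times. Given the target $T'>0$, the integer-valued non-increasing map $T\mapsto\dim\mathcal F_T$ must be constant on some interval $[T,T+\epsilon]\subset(0,T')$, hence $\mathcal F_T=\mathcal F_{T+\epsilon}$. For $t\in(0,\epsilon)$ and $u_0\in\mathcal F_T=\mathcal F_{T+\epsilon}$ one has $e^{tA}u_0\in\mathcal F_T$, so the difference quotients $(e^{tA}u_0-u_0)/t$ lie in $\mathcal F_T$; their limit exists in $L_2(0,T;H^{-2}(I))$ and, $\mathcal F_T$ being finite-dimensional (hence closed in any ambient topology), must lie in $\mathcal F_T\subset L_2(I)$. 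This gives $\mathcal F_T\subset D(A)$ with $A(\mathcal F_T)\subset\mathcal F_T$, and every $u_0\in\mathcal F_T$ satisfies $u_0'(0)=0$ directly from $Pu_0=u_x|_{x=0}\equiv 0$. If $\mathcal F_T\ne\{0\}$ one obtains the forbidden eigenfunction; hence $\mathcal F_T=\{0\}$, and then $\mathcal F_{T'}\subset\mathcal F_T=\{0\}$ because $T<T'$. No passage to $\mathcal F_\infty$ is needed, and the obstacle you anticipate dissolves once the stabilization interval is chosen \emph{below} $T'$ rather than above $T$.
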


\begin{proof}
It is obvious that $\mathcal F_{T'} \subseteq \mathcal F_T$ if $T<T'$.

For any $T>0$ the set $\mathcal F_T$ is a finite-dimensional vector space, In fact, if $u_{0n}$ is a sequence in a unit ball $\{y\in \mathcal F_T: \|y\|_{L_2(I)} \leq 1\}$ it follows from \eqref{12} that the corresponding sequence of weak solutions $\{u_n\}$ is bounded in $L_2(0,T;H^1(I)))$ and, therefore, the set 
\begin{equation}\label{17}
u_{nt} = -u_{nxxx} + i a u_{nxx} - b u_{nx}
\end{equation}
is bounded in $L_2(0,T; H^{-2}(I))$. With the use of the continuous embeddings $H^1(I) \subset
L_2(I) \subset H^{-2}(I)$, where the first one is compact, by the standard argument (see \cite{Lions}) we obtain that the set $u_n$ is relatively compact in $L_2(Q_T)$. Extracting the subsequence, we derive that it is convergent in $L_2(Q_T)$, whence it follows from \eqref{16} that the corresponding subsequence of $u_{0n}$ is convergent in $L_2(I)$. It means that the considered unit ball is compact and the Riesz theorem (see \cite{Yosida}) implies that the space $\mathcal F_T$ has a finite dimension.

Let $T'>0$ is given. To prove  that $\mathcal F_{T'} = \{0\}$, it is sufficient to find $T \in (0,T')$ such that $\mathcal F_{T} = \{0\}$. Since the map $T \mapsto \dim(\mathcal F_T)$ is non-increasing and step-like, there exist $T,\epsilon >0$ such that $T<T+\epsilon<T'$ and $\dim\mathcal F_T = \dim\mathcal F_{T+\epsilon}$. Let $u_0 \in \mathcal F_T$ and $t\in (0,\epsilon)$. Since $e^{tA} e^{\tau A} u_0 = e^{(t+\tau)A} u_0$ for $\tau\geq 0$ and $u_0 \in \mathcal F_{T+\epsilon}$, then
\begin{equation}\label{18}
\frac{e^{tA} u_0 - u_0}t \in \mathcal F_T.
\end{equation}
Let $\mathcal M_T = \{u = e^{\tau A} u_0 : \tau \in [0,T], u_0 \in \mathcal F_T\} \subset C([0,T];L_2(I))$. Since $u \in H^1(0,T+\epsilon; H^{-2}(I))$, there exists
$$
\lim\limits_{t\to +0} \frac{u(\tau +t) - u(\tau)}t = u'(\tau) \quad \text{in} \ L_2(0,T; H^{-2}(I)).
$$
On the other hand, by \eqref{18}
$$
\frac{u(\tau +t) - u(\tau)}t  = e^{\tau A} \frac{e^{tA} u_0 - u_0}t \in \mathcal M_T 
$$
for $t\in (0,\epsilon)$ and $\mathcal M_T$ is closed in $L_2(0,T; H^{-2}(I))$ since $\dim\mathcal M_T <\infty$. Therefore, $u' \in C([0,T];L_2(I))$ and $u \in C^1([0,T]; L_2(I))$. In particular,
$$
u'(0) = \lim\limits_{t\to +0} \frac{e^{tA} u_0 - u_0}t \quad \text{in} \ L_2(I).
$$
Therefore,
$$
u_0 \in D(A), \quad Au_0 = u'(0) \in \mathcal F_T, \quad Pu_0 = u_x\big|_{x=0} \in C[0,T]
$$
(the last property holds since $u \in C([0,T]; H^3(I))$). Hence,
$$
u_0'(0) = u_x(0,0) = 0.
$$
Since $\dim\mathcal F_T <\infty$, if $\mathcal F_T \ne \{0\}$ the map $u_0\in \mathcal F_T \mapsto Au_0 \in \mathcal F_T$ has at least one nontrivial eigenfunction, which contradicts Lemma~\ref{L3}.
\end{proof}

\begin{lemma}\label{L5}
Let either $3b + a^2 \leq 0$ or $3b + a^2 >0$ and inequality \eqref{5} be satisfied. Then for any $T>0$ there exists a constant $c = c(T,R)$ such that for any $u_0 \in L_2(I)$
\begin{equation}\label{19}
\|u_0\|_{L_2(I)} \leq c \|Pu_0 \|_{L_2(0,T)}.
\end{equation}
\end{lemma}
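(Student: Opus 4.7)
The plan is to argue by contradiction using a compactness argument, reducing the observability inequality \eqref{19} to the triviality of the unobservable space $\mathcal F_T$ already established in Lemma~\ref{L4}.

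First I would suppose \eqref{19} fails. Then there is a sequence $\{u_{0n}\}\subset L_2(I)$ with $\|u_{0n}\|_{L_2(I)}=1$ for all $n$ but $\|Pu_{0n}\|_{L_2(0,T)}\to 0$. Let $u_n\in X(Q_T)$ denote the corresponding weak solutions of problem \eqref{9}, \eqref{10} with $f_0=f_1\equiv 0$. By Lemma~\ref{L2} (estimate \eqref{12}), the sequence $\{u_n\}$ is bounded in $L_2(0,T;H^1(I))$, and from the equation itself $u_{nt}=-u_{nxxx}+iau_{nxx}-bu_{nx}$ the sequence $\{u_{nt}\}$ is bounded in $L_2(0,T;H^{-2}(I))$.

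Next I would apply the standard Aubin--Lions compactness argument (exactly as employed in the proof of Lemma~\ref{L4}) using the compact embedding $H^1(I)\hookrightarrow L_2(I)\hookrightarrow H^{-2}(I)$ to extract a subsequence, still denoted $\{u_n\}$, converging strongly in $L_2(Q_T)$. Then inequality \eqref{16} of Corollary~\ref{C1},
$$
\|u_{0n}-u_{0m}\|^2_{L_2(I)}\leq \frac1T\|u_n-u_m\|^2_{L_2(Q_T)}+\|Pu_{0n}-Pu_{0m}\|^2_{L_2(0,T)},
$$
shows that $\{u_{0n}\}$ is Cauchy in $L_2(I)$; hence $u_{0n}\to u_0^*$ in $L_2(I)$ with $\|u_0^*\|_{L_2(I)}=1$. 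By continuity of $P$ (inequality \eqref{15}), $Pu_0^*=0$, so $u_0^*\in\mathcal F_T$.

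Finally, Lemma~\ref{L4} asserts that under the hypothesis of the present lemma (either $3b+a^2\leq 0$, or $3b+a^2>0$ together with \eqref{5}) one has $\mathcal F_T=\{0\}$, forcing $u_0^*=0$, which contradicts $\|u_0^*\|_{L_2(I)}=1$. The only genuine ingredients are already in place: strong $L_2(Q_T)$-compactness of $\{u_n\}$, the reverse-type estimate \eqref{16}, continuity of $P$, and the triviality of $\mathcal F_T$; the main obstacle was entirely handled upstream in Lemma~\ref{L4} via Lemma~\ref{L3}, so here no additional analytic difficulty should arise.
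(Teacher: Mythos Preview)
Your proof is correct and follows essentially the same route as the paper's: contradiction via a normalized sequence with $\|Pu_{0n}\|_{L_2(0,T)}\to 0$, boundedness of $\{u_n\}$ in $L_2(0,T;H^1(I))$ and of $\{u_{nt}\}$ in $L_2(0,T;H^{-2}(I))$, Aubin--Lions compactness in $L_2(Q_T)$, then \eqref{16} to get convergence of $\{u_{0n}\}$ in $L_2(I)$, and finally \eqref{15} plus Lemma~\ref{L4} for the contradiction. Your exposition is in fact slightly more explicit than the paper's (you spell out the Cauchy-sequence use of \eqref{16}), but the argument is identical.
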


\begin{proof}
We argue by contradiction. If \eqref{19} is not verified there exists a sequence $\{u_{0n}\}_{n\in \mathbb N}$ such that $\|u_{0n}\|_{L_2(I)} =1$ $\forall n$ and $\|Pu_0\|_{L_2(0,T)} \to 0$ when $n\to +\infty$. As in the proof of the previous lemma the corresponding sequence of weak solutions $\{u_n\}$ is bounded in $L_2(0,T;H^1(I))$ and according to \eqref{17} the sequence $u_{nt}$ is bounded in $L_2(0,T;H^{-2}(I))$. Again as in the proof of the previous lemma extract a subsequence of $\{u_n\}$, for simplicity also denoted as  $\{u_n\}$, such that 
it is convergent in $L_2(Q_T)$. Then by \eqref{16} $\{u_{0n}\}$ converges in $L_2(I)$ to certain function $u_0$. Inequality \eqref{15} implies that $Pu_{0n} \to Pu_0$ in $L_2(0,T)$. Then $\|u_0\|_{L_2(I)} =1$ and $\|Pu_0\|_{L_2(0,T)} = 0$, which contradicts Lemma~\ref{L4}.
\end{proof}

Now consider the non-homogeneous linear equation
\begin{equation}\label{20}
i u_t  + a u_{xx} + i b u_x + i u_{xxx}  = f_0(t,x) - f_{1x}(t,x).
\end{equation}
The notion of a weak solution to the corresponding initial-boundary value problem with initial and boundary conditions \eqref{2}, \eqref{3} is similar to Definition~\ref{D1}. In particular, the corresponding integral identity (for the same test functions as in Definition~\ref{D1}) is written as follows:
\begin{multline}\label{21}
\iint_{Q_T} \Bigl[ i u\phi_t  + a u_x \phi_x + i b u \phi_x - i u_x \phi_{xx} + f_0\phi + f_1\phi_x\Bigr]\, dxdt \\ +
i \int_0^R u_0 \phi\big|_{t=0} \,dx + i \int_0^T h \phi_x\big|_{x=R} \,dt =0.
\end{multline}
The following result is established in \cite{FM}.

\begin{lemma}\label{L6}
Let $u_0\in L_2(I)$, $\mu, \nu \in H^{1/3}(0,T)$, $h\in L_2(0,T)$, $f_0 \in L_{1}(0,T;L_2(I))$, $f_1 \in L_2(Q_T)$. Then there exists a unique weak solution $u = S(u_0, \mu, \nu, h, f_0, f_1)\in X(Q_T)$ of problem \eqref{20}, \eqref{2}, \eqref{3} and 
\begin{multline}\label{22}
\|u\|_{X(Q_{T})} \leq c(T)\Bigl[ \|u_0\|_{L_2(I)} + \|\mu\|_{H^{1/3}(0,T)} + \|\nu\|_{H^{1/3}(0,T)}  + \|h\|_{L_2(0,T)} \\+
\|f_0\|_{L_{1}(0,T;L_2(I))} + \|f_1\|_{L_2(Q_{T})}\Bigr],
\end{multline}
for certain constant $c(T)$, non-decreasing with respect to $T$.
\end{lemma}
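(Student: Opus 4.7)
My plan is to exploit the linearity of \eqref{20}--\eqref{3} and decompose $u = v + w$, where $v$ carries the interior data $(u_0,f_0,f_1)$ with homogeneous boundary traces and $w$ carries the three inhomogeneous boundary data $(\mu,\nu,h)$ with zero initial value and zero source. The function $v$ is furnished directly by Lemma~\ref{L2}, which already delivers the portion of \eqref{22} involving $u_0, f_0, f_1$; the entire difficulty concentrates on constructing $w$.

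For each boundary trace I would build a \emph{boundary potential}---a solution of the homogeneous equation $iw_t + aw_{xx} + ibw_x + iw_{xxx} = 0$ on a suitable extension of $Q_T$ whose prescribed trace matches the given datum while the other two traces vanish. Following the Kenig--Ponce--Vega framework referenced in Remark~\ref{R2}, one represents the unitary group $U(t)$ associated with the real symbol $\xi^3 - a\xi^2 - b\xi$ via the Fourier transform and exploits the sharp trace identity
\begin{equation*}
\|D^{1/3}_t U(t)g|_{x=x_0}\|_{L_2(\mathbb R)} + \|(U(t)g)_x|_{x=x_0}\|_{L_2(\mathbb R)} = c\|g\|_{L_2(\mathbb R)}
\end{equation*}
to invert the trace map, producing $w_\mu,w_\nu$ from data in $H^{1/3}(0,T)$ and $w_h$ from data in $L_2(0,T)$, each satisfying an $X$-type bound on the extended strip. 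A smooth spatial cutoff supported in a neighbourhood of $[0,R]$ then localizes these functions to $I$; the commutators with the differential operator generate lower-order corrections of the form $\widetilde f_0 - \widetilde f_{1x}$ with $\widetilde f_0 \in L_1(0,T;L_2(I))$ and $\widetilde f_1 \in L_2(Q_T)$, controlled by the boundary norms in \eqref{22}, which are then absorbed into an additional interior problem solved by Lemma~\ref{L2}.

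For uniqueness, if $u$ is a weak solution with vanishing data, I would test \eqref{21} against smooth $\phi$ solving the time-reversed adjoint problem with $\phi|_{t=T}=0$, $\phi|_{x=0}=\phi|_{x=R}=\phi_x|_{x=0}=0$ and arbitrary source $g \in C_0^\infty(Q_T)$; existence of such $\phi$ with the regularity required by Definition~\ref{D1} is supplied by the $A^*$-semigroup from Lemma~\ref{L1}. The identity collapses to $\iint_{Q_T} u\bar g\,dxdt = 0$ for all such $g$, whence $u \equiv 0$.

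The principal obstacle is the boundary-potential construction, and in particular the correct matching of half-lines to traces: the extension domain for $w_\mu$ should lie to the left of $x=0$, while $w_\nu$ and $w_h$ require extensions to the right of $x=R$, a pairing dictated by the asymmetric KdV-type dispersion that forces two boundary conditions at the right endpoint and only one at the left. Tracking the exponents $H^{1/3}$ versus $L_2$ is exactly what the KdV scaling dictates---one $x$-derivative costs one-third of a time derivative---and once the trace inversion is arranged on the correct half-lines the remaining estimates are routine. Since this construction is carried out in detail in \cite{FM}, it would suffice to assemble these pieces and cite that source.
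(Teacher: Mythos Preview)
Your proposal is sound, but note that the paper does not actually prove this lemma: the sentence immediately preceding Lemma~\ref{L6} reads ``The following result is established in \cite{FM},'' and no argument is given in the present paper. Your sketch---splitting $u=v+w$, handling $(u_0,f_0,f_1)$ by Lemma~\ref{L2}, and lifting the boundary data $(\mu,\nu,h)$ via Fourier-based boundary potentials with the $H^{1/3}$/$L_2$ scaling from the KPV trace identities, then absorbing cutoff commutators back into Lemma~\ref{L2}---is precisely the standard architecture for such results and is consistent with what \cite{FM} carries out; indeed you yourself conclude by deferring to that reference. So your proposal and the paper agree: both ultimately point to \cite{FM}, and the outline you give is a faithful summary of the method one expects to find there.
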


\begin{remark}\label{R5}
Let
$$
 S_T(u_0, \mu, \nu, h, f_0, f_1) \equiv S(u_0, \mu, \nu, h, f_0, f_1)\big|_{t=T}
$$
Then it follows from \eqref{22} that
\begin{multline}\label{23}
\|S_T(u_0, \mu, \nu, h, f_0, f_1)\|_{L_2(0,R)} \leq c(T)\Bigl[ \|u_0\|_{L_2(I)} + \|\mu\|_{H^{1/3}(0,T)} + \|\nu\|_{H^{1/3}(0,T)} \\ + \|h\|_{L_2(0,T)} +
\|f_0\|_{L_{1}(0,T;L_2(I))} + \|f_1\|_{L_2(Q_{T})}\Bigr].
\end{multline}
Note also that $S(u_0,0,0,0,0,0) = \{e^{tA}u_0 : t\in [0,T]\}$.
\end{remark}

\begin{corollary}\label{C2}
Let the hypothesis of Lemma~\ref{L6} be satisfied, then for $u = S(u_0, \mu, \nu, h, f_0, f_1)$ and any function $\phi\in C^1([0,T]; L_2(I)) \cap C([0,T];(H^{3}\cap H^1_0)(I))$, 
$\phi_x\big|_{x=0} \equiv 0$, the following identity holds:
\begin{multline}\label{24}
\iint_{Q_T} \Bigl[u (i\phi_t  - a \phi_{xx} + i b \phi_x + i \phi_{xxx}) + f_0\phi + f_1\phi_x\Bigr]\, dxdt  +
i \int_0^R u_0 \phi\big|_{t=0} \,dx   \\ 
- i\int_0^R (u \phi)\big|_{t=T} \,dx +  \int_0^T \mu (i \phi_{xx} - a\phi_x)\big|_{x=0} \,dt +  i \int_0^T (h \phi_x - \nu \phi_{xx}) \big|_{x=R} \,dt =0.
\end{multline}
\end{corollary}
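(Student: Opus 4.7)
The plan is to derive \eqref{24} from the defining integral identity \eqref{21} of the weak solution $u$ by two successive operations: first relax the constraint $\phi\big|_{t=T}\equiv 0$ required in \eqref{21} via a cutoff in $t$, and then integrate by parts in $x$ to shift the remaining spatial derivatives from $u$ onto $\phi$.

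For the time cutoff, given a test function $\phi$ of the corollary's class, choose $\eta_\varepsilon\in C^1([0,T])$ with $\eta_\varepsilon\equiv 1$ on $[0,T-\varepsilon]$ and $\eta_\varepsilon(T)=0$. Then $\phi\eta_\varepsilon$ satisfies all conditions imposed on test functions in Definition~\ref{D1}, so \eqref{21} is applicable to it. As $\varepsilon\to 0^+$, every term except $\iint_{Q_T} iu\phi\,\eta_\varepsilon'\,dxdt$ converges to the corresponding term without $\eta_\varepsilon$ by dominated convergence. The exceptional term tends to $-i\int_0^R (u\phi)\big|_{t=T}\,dx$, since $t\mapsto \int_0^R u(t,x)\phi(t,x)\,dx$ is continuous on $[0,T]$ (because $u,\phi\in C([0,T];L_2(I))$) and $\eta_\varepsilon'$ approximates $-\delta_T$ in the appropriate distributional sense. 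The outcome is \eqref{21} augmented by the extra term $-i\int_0^R (u\phi)|_{t=T}\,dx$, now valid for general $\phi$ of the stated class.

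In the second stage, observe that for a.e. $t\in(0,T)$ the slice $u(t,\cdot)\in H^1(I)$ admits traces $u(t,0)=\mu(t)$ and $u(t,R)=\nu(t)$ (part of Definition~\ref{D1}), while $\phi(t,\cdot)\in (H^3\cap H^1_0)(I)$. Integration by parts in $x$ on each slice is therefore legitimate and gives
\begin{gather*}
\int_0^R au_x\phi_x\,dx = \bigl[au\phi_x\bigr]_0^R - \int_0^R au\phi_{xx}\,dx, \\
-\int_0^R iu_x\phi_{xx}\,dx = -\bigl[iu\phi_{xx}\bigr]_0^R + \int_0^R iu\phi_{xxx}\,dx,
\end{gather*}
with boundary terms evaluated through $u|_{x=0}=\mu$, $u|_{x=R}=\nu$ and $\phi_x|_{x=0}=0$. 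Integration in $t$ is handled by Fubini, using that $\mu,\nu\in L_2(0,T)$ and that the $x$-traces of $\phi_x$ and $\phi_{xx}$ lie in $C([0,T])$. Substituting these into the identity produced in the first stage and collecting the boundary integrands at $x=0$ and $x=R$ together with the term $i\int_0^T h\phi_x|_{x=R}\,dt$ already present in \eqref{21} reorganizes the identity into \eqref{24}.

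The main technical point requiring care is the justification of the spatial integrations by parts at the minimal regularity of the weak solution; this is supplied by the $L_2(0,T;H^1(I))$ component of the $X(Q_T)$ norm together with the built-in trace conditions on $u$. Alternatively, one may first establish \eqref{24} for the regular solutions produced in Remark~\ref{R3} and then pass to the limit using the continuity estimate \eqref{22}, density of regular data in the data space, and continuity of the boundary traces of $\phi$ and its $x$-derivatives.
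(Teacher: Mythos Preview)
Your proof is correct and follows essentially the same approach as the paper: a time cutoff $\phi_\varepsilon=\phi\,\eta\bigl((T-t)/\varepsilon\bigr)$ is applied so that \eqref{21} becomes available, the limit $\varepsilon\to 0^+$ produces the additional term $-i\int_0^R(u\phi)\big|_{t=T}\,dx$ via continuity of $t\mapsto\int_0^R u\phi\,dx$, and then spatial integration by parts converts \eqref{21} into \eqref{24}. Your justification of the spatial integrations by parts through the $L_2(0,T;H^1(I))$ regularity of $u$ and the trace conditions is exactly what is needed, and the alternative density argument you mention at the end is also valid.
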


\begin{proof}
Let $\eta(x)$ be a cut-off function, namely, $\eta$ is an infinitely smooth non-decreasing function on $\mathbb R$ such that $\eta(x)=0$ for $x\leq 0$, $\eta(x)=1$ for $x\geq 1$, $\eta(x)+\eta(1-x) \equiv 1$. Denote $\phi_\varepsilon(t,x) \equiv \phi(t,x) \eta\bigl((T-t)/\varepsilon\bigr)$, then $\phi_\varepsilon$ satisfies the assumptions on test functions from Definition~\ref{D1}. Write the corresponding equality\eqref{21}:
\begin{multline*}
\iint_{Q_T} \Bigl[ i u\phi_{\varepsilon t}  + a u_x \phi_{\varepsilon x} + i b u \phi_{\varepsilon x} - i u_x \phi_{\varepsilon xx} + f_0\phi_\varepsilon + f_1\phi_{\varepsilon x}\Bigr]\, dxdt \\ + i \int_0^R u_0 \phi_\varepsilon\big|_{t=0} \,dx + i \int_0^T h \phi_{\varepsilon x}\big|_{x=R} \,dt =0.
\end{multline*}
Here 
$$
\phi_{\varepsilon t}(t,x) = \phi_t(t,x) \eta\Bigl(\frac{T-t}\varepsilon\Bigr) -\frac1\varepsilon \phi(t,x)\eta'\Bigl(\frac{T-t}\varepsilon\Bigr).
$$
Since $u\phi \in C([0,T];L_1(I)$, 
$$
-\frac1\varepsilon \iint_{Q_T} u \phi \eta'\Bigl(\frac{T-t}\varepsilon\Bigr) \,dxdt \to -\int_0^R (u \phi)\big|_{t=T} \,dx
$$
when $\varepsilon \to +0$. Therefore, passing to the limit when $\varepsilon \to +0$ and integrating by parts we derive equality \eqref{24}.
\end{proof}

Establish a result on boundary controllability in the linear case.

\begin{theorem}\label{T2}
Let $u_0, u_T \in L_2(I)$, $\mu, \nu \in H^{1/3}(0,T)$, $f_0\in L_1(0,T;L_2(I))$, $f_1 \in L_2(Q_T)$. Assume also that if $3b + a^2 > 0$ condition \eqref{5} is satisfied.
Then there exists a function $h\in L_2(0,T)$ and the corresponding unique solution of problem \eqref{20}, \eqref{2}, \eqref{3} $u\in X(Q_T)$, verifying condition \eqref{4}.
\end{theorem}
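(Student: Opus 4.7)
The plan is to apply the Hilbert Uniqueness Method in the spirit of Rosier~\cite{Rosier}. First, I reduce to a pure control problem: by Lemma~\ref{L6}, the function $v = S(u_0,\mu,\nu,0,f_0,f_1) \in X(Q_T)$ is well defined and $v(T) \in L_2(I)$. Setting $U_T := u_T - v(T) \in L_2(I)$, the task reduces to finding $h \in L_2(0,T)$ such that $w := S(0,0,0,h,0,0)$ satisfies $w(T) = U_T$; then $u := v+w$ answers the theorem, and uniqueness is already covered by Lemma~\ref{L6}.

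Second, I establish a duality identity between $h$ and backwards adjoint trajectories. Recall from the proof of Lemma~\ref{L1} that $A^* y = y''' - iay'' + by'$ with $D(A^*) = \{y \in H^3(I): y(0) = y'(0) = y(R) = 0\}$ also generates a contraction semigroup. For $\phi_T \in D(A^*)$ set $\phi(t,\cdot) := e^{(T-t)A^*}\phi_T$, which lies in $C^1([0,T];L_2(I)) \cap C([0,T];D(A^*))$. A direct computation shows that $\bar\phi$ verifies $i\bar\phi_t - a\bar\phi_{xx} + ib\bar\phi_x + i\bar\phi_{xxx} = 0$ together with $\bar\phi|_{x=0} = \bar\phi|_{x=R} = \bar\phi_x|_{x=0} = 0$, so $\bar\phi$ is an admissible test function in Corollary~\ref{C2}. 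Substituting $\phi \to \bar\phi$ in \eqref{24} for $u = w$ (all data vanish except $w(T) = U_T$) collapses to the duality
$$
\int_0^R U_T(x)\,\overline{\phi_T(x)}\,dx = \int_0^T h(t)\,\overline{\phi_x(t,R)}\,dt.
$$
Arguments parallel to Lemma~\ref{L2} and Corollary~\ref{C1}, applied to the semigroup $e^{tA^*}$, extend $\phi_T \mapsto \phi_x(\cdot, R)$ by density to a bounded linear operator $P^*: L_2(I) \to L_2(0,T)$.

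Third, I invoke HUM. Define $\Lambda : L_2(I) \to L_2(I)$ by $\Lambda \phi_T := w(T)$, where $w = S(0,0,0,P^*\phi_T,0,0)$. The duality identity yields $(\Lambda\phi_T,\psi_T)_{L_2(I)} = (P^*\phi_T, P^*\psi_T)_{L_2(0,T)}$, so $\Lambda$ is bounded, self-adjoint, and satisfies $(\Lambda\phi_T,\phi_T)_{L_2(I)} = \|P^*\phi_T\|_{L_2(0,T)}^2$. The main obstacle is therefore the observability inequality $\|\phi_T\|_{L_2(I)} \leq c\|P^*\phi_T\|_{L_2(0,T)}$ for $A^*$, i.e.\ the analog of Lemma~\ref{L5}. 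I expect this to follow by repeating the arguments of Lemmas~\ref{L3}--\ref{L5} for $A^*$; the spectral constraint produced by the analog of Lemma~\ref{L3} coincides with \eqref{5}, being symmetric in $k$, $l$ and insensitive to the sign changes relating $A$ and $A^*$. Alternatively, the antilinear isometry $Uy(x) := \overline{y(R-x)}$ satisfies $AU = UA^*$ and carries $(P^*\phi_T)(t)$ to $-\overline{(P(U\phi_T))(T-t)}$, reducing observability for $A^*$ directly to Lemma~\ref{L5}. Once coercivity of $\Lambda$ is in place, Lax--Milgram produces $\Lambda^{-1}$, and $h := P^*\Lambda^{-1} U_T \in L_2(0,T)$ is the desired control.
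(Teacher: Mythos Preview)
Your proposal is correct and follows essentially the same HUM strategy as the paper: reduce to the pure control problem by subtracting $S(u_0,\mu,\nu,0,f_0,f_1)$, derive the duality identity from Corollary~\ref{C2}, establish observability for the backward problem, and conclude by Lax--Milgram. The only cosmetic difference is packaging: the paper converts the backward problem into the forward one via the (linear) reflection $\widetilde\phi(t,x)=\phi(T-t,R-x)$ and then invokes Corollary~\ref{C1} and Lemma~\ref{L5} directly, whereas you phrase things through the adjoint semigroup $e^{tA^*}$ and reduce observability to Lemma~\ref{L5} via the antilinear isometry $Uy(x)=\overline{y(R-x)}$---the same trick up to a complex conjugation (your computation $AU=UA^*$ is correct, and your $\Lambda$ coincides with the paper's operator $B=S_{0T}\circ\Lambda$). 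One small wording slip: in your duality identity the left-hand side should carry $S_{0T}h$, not $U_T$, since at that stage $h$ is arbitrary and $w(T)=U_T$ is what you are about to enforce; the paper's equation~\eqref{28} states this cleanly.
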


\begin{proof}
Assume first that $u_0\equiv 0$, $\mu=\nu \equiv 0$, $f_0 =f_1 \equiv 0$. For $h\in L_2(0,T)$ consider the solution $u = S(0,0,0,h,0,0) \equiv S_0h \in X(Q_T)$ of the corresponding problem \eqref{20}, \eqref{2}, \eqref{3}; let $S_{0T}h \equiv S_{0}h \big|_{t=T}$. Then estimate \eqref{23} implies that $S_{0T}$ is the linear bounded operator from $L_2(0,T)$ to $L_2(I)$.

Consider the backward problem in $Q_T$
\begin{gather}\label{25}
i \phi_t - a \phi_{xx} + i b \phi_x + i \phi_{xxx} =0, \\
\label{26}
\phi\big|_{t=T} = \phi_0(x), \quad \phi\big|_{x=0} = \phi_x\big|_{x=0} = \phi\big|_{x=R} =0.
\end{gather}
Then this problem is equivalent to the problem for the function $\widetilde\phi(t,x) \equiv \phi(T-t, R-x)$
\begin{gather*}
i \widetilde\phi_t + a \widetilde\phi_{xx} + i b \widetilde\phi_x + i \widetilde\phi_{xxx} =0, \\
\widetilde\phi\big|_{t=0} = \widetilde\phi_0(x) \equiv \phi_0(R-x), \quad \widetilde\phi\big|_{x=0}  = \widetilde\phi\big|_{x=R} = \widetilde\phi_x\big|_{x=R} =0.
\end{gather*} 
Let
$$
(\Lambda\phi_0)(t) \equiv -(P_0\widetilde\phi_0)(T-t).
$$
Then it follows from Corollary~\ref{C1} that $\Lambda\phi_0 = \phi_x\big|_{x=R}$ if $\phi_0 \in D(A^*)$ and from inequalities \eqref{15}, \eqref{19} that
\begin{equation}\label{27}
\|\Lambda\phi_0\|_{L_2(0,T)} \leq \|\phi_0\|_{L_2(I)} \leq c \|\Lambda\phi_0\|_{L_2(0,T)}.
\end{equation}

In the case $\phi_0 \in D(A^*)$ the corresponding solution of problem \eqref{25}, \eqref{26} satisfies the assumptions on the functions $\phi$ from Corollary~\ref{C2}. Write equality \eqref{24} for $u = S_0h$ and $\bar\phi$, then
\begin{equation}\label{28}
\int_0^R S_{0T}h  \cdot \overline{\phi_0} \,dx = \int_0^T h \cdot \overline{\Lambda\phi_0} \,dt.
\end{equation}
By continuity this equality can be extended to the case $h\in L_2(0,T)$, $\phi_0 \in L_2(I)$. Let $B \equiv S_{0T} \circ \Lambda$, then according to \eqref{27} and the aforementioned properties of the operator $S_{0T}$ the operator $B$ is bounded in $L_2(I)$. Moreover, \eqref{27} and \eqref{28} provide that 
$$
( B\phi_0, \phi_0)_{L_2(I)} = \int_0^R ( S_{0T} \circ \Lambda)\phi_0 \cdot \overline{\phi_0} \,dx = \int_0^T |\Lambda\phi_0|^2 \,dt \geq \frac1{c^2} \|\phi_0\|^2_{L_2(I)}.
$$
Application of the Lax--Milgram theorem (see, \cite{Yosida}) implies, that the operator $B$ is invertible and $B^{-1}$ is bounded in $L_2(I)$. Let
\begin{equation}\label{29}
\Gamma \equiv \Lambda \circ B^{-1}.
\end{equation}
This operator is bounded from $L_2(I)$ to $L_2(0,T)$. Then $h\equiv \Gamma u_T$ ensures the desired result in the considered case, since
$$
(S_{0T} \circ \Gamma) u_T = (S_{0T} \circ \Lambda \circ B^{-1}) u_T = u_T.
$$

In the general case the desired solution is constructed by formulas 
\begin{equation}\label{30}
h \equiv \Gamma\bigl( u_T - S_T(u_0,\mu,\nu,0,f_0,f_1)\bigr), \quad u \equiv S(u_0,\mu,\nu,0,f_0,f_1) + S_0h.
\end{equation}
\end{proof}

\begin{remark}\label{R6}
Note that the function $h$ can not be defined in a unique way. Indeed, choose $h \ne 0$ in $L_2(0,T/2)$. Move the time origin to the point $T/2$ and for $u_0 \equiv S_{T/2}(0,0,0,h,0,0)$ and $u_T\equiv 0$ construct the solution of the corresponding boundary controllability problem,  which is, of course, nontrivial. However, $h\equiv 0$ and $u\equiv 0$ solve the same problem.
\end{remark}

\section{Nonlinear problem}\label{S3}

Now we pass to the nonlinear equation and first of all establish three auxiliary estimates.

\begin{lemma}\label{L7}
Let $p\in [1,4]$, then for any functions $u, v, \in X(Q_T)$
\begin{equation}\label{31}
\bigl\| |u|^{p} v \bigr\|_{L_{1}(0,T;L_2(I))}
\leq c (T^{(4-p)/4} +T) \|u\|_{X(Q_T)}^{p}  \|v\|_{X(Q_T)}.
\end{equation}
\end{lemma}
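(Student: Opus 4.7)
The plan is to reduce the $L_2(I)$-norm of $|u|^{p}v$ by pulling out $\|u\|_{L_\infty(I)}^{p}$, invoke the interpolation inequality \eqref{8}, and then integrate in time using H\"older. Since $X(Q_T)$ controls $u$ both in $L_\infty(0,T;L_2(I))$ and in $L_2(0,T;H^1(I))$, this is a routine Gagliardo--Nirenberg/H\"older computation, and the role of the hypothesis $p\leq 4$ is to make the resulting time integral convergent.

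Concretely, for a.e.\ $t\in(0,T)$ I would write
$$\bigl\||u(t,\cdot)|^{p}v(t,\cdot)\bigr\|_{L_2(I)}\leq\|u(t,\cdot)\|_{L_\infty(I)}^{p}\|v(t,\cdot)\|_{L_2(I)}\leq\|u(t,\cdot)\|_{L_\infty(I)}^{p}\|v\|_{X(Q_T)},$$
so it suffices to bound $\int_0^T\|u(t,\cdot)\|_{L_\infty(I)}^{p}\,dt$ by $c(T^{(4-p)/4}+T)\|u\|_{X(Q_T)}^{p}$. Raising \eqref{8} to the $p$-th power yields
$$\|u(t,\cdot)\|_{L_\infty(I)}^{p}\leq c\|u_x(t,\cdot)\|_{L_2(I)}^{p/2}\|u(t,\cdot)\|_{L_2(I)}^{p/2}+c\|u(t,\cdot)\|_{L_2(I)}^{p}.$$
The second term integrates to at most $cT\|u\|_{X(Q_T)}^{p}$ by $\|u(t,\cdot)\|_{L_2(I)}\leq\|u\|_{X(Q_T)}$. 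For the first term I factor out $\|u(t,\cdot)\|_{L_2(I)}^{p/2}\leq\|u\|_{X(Q_T)}^{p/2}$ and apply H\"older's inequality to $\int_0^T\|u_x(t,\cdot)\|_{L_2(I)}^{p/2}\,dt$ with conjugate exponents $4/p$ and $4/(4-p)$, obtaining
$$\int_0^T\|u_x(t,\cdot)\|_{L_2(I)}^{p/2}\,dt\leq T^{(4-p)/4}\|u_x\|_{L_2(Q_T)}^{p/2}\leq T^{(4-p)/4}\|u\|_{X(Q_T)}^{p/2}.$$
Combining the two contributions gives \eqref{31}.

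There is no genuine obstacle: the whole argument is a routine interpolation exercise. The only point deserving attention is the threshold $p\leq 4$, which is precisely what ensures $p/2\leq 2$ and makes the H\"older conjugate $4/(4-p)$ well-defined; at the endpoint $p=4$ the prefactor $T^{(4-p)/4}$ simply becomes $1$ and the H\"older step collapses to bounding $\|u_x(t,\cdot)\|_{L_2(I)}^{2}$ by $\|u_x\|_{L_2(Q_T)}^{2}$ directly.
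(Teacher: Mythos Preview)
Your proof is correct and follows essentially the same route as the paper: pull out $\|u\|_{L_\infty(I)}^{p}$, apply the interpolation inequality \eqref{8}, and use H\"older in time with exponents $4/p$ and $4/(4-p)$ to produce the factor $T^{(4-p)/4}$. The only cosmetic difference is that the paper keeps $\|v(t,\cdot)\|_{L_2(I)}$ inside the time integral and bounds it together with $\|u(t,\cdot)\|_{L_2(I)}^{p/2}$ by the supremum, whereas you extract it first; the estimates are otherwise identical.
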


\begin{proof}
Applying interpolating inequality \eqref{8} we find that
\begin{equation}\label{32}
\bigl\| |u|^{p} v \bigr\|_{L_2(I)} \leq \|u\|_{L_{\infty}(I)}^{p} \|v\|_{L_{2}(I)}  \\ \leq
c\left(\|u_x\|_{L_2(I)}^{p/2}\|u\|_{L_2(I)}^{p/2} + \|u\|_{L_2(I)}^{p}\right)
\|v\|_{L_2(I)},
\end{equation}
and, applying the H\"{o}lder inequality, we find that 
\begin{multline*}
\bigl\| \|u_x\|_{L_2(I)}^{p/2} \|v\|_{L_2(I)}\bigr\|_{L_{1}(0,T)}  \\ \leq
T^{(4-p)/4} \sup\limits_{t\in [0,T]}  
\Bigl[\|u(t,\cdot)\|_{L_2(I)}^{p/2} \|v(t,\cdot)\|_{L_2(I)}\Bigr]
\|u_x\|_{L_2(Q_T)}^{p/2} \\  \leq
T^{(4-p)/4} \|u\|_{X(Q_T)}^{p} \|v\|_{X(Q_T)}.
\end{multline*}
Finally,
\begin{multline*}
\bigl\| \|u\|_{L_2(I)}^{p} \|v\|_{L_2(I)} \bigr\|_{L_{1}(0,T)}  \leq
T \sup\limits_{t\in [0,T]} \Bigl[ \|u(t,\cdot)\|_{L_2(I)}^{p}  \|v(t,\cdot)\|_{L_2(I)}\Bigr]    \\ \leq T\|u\|_{X(Q_T)}^{p} \|v\|_{X(Q_T)}.
\end{multline*}
\end{proof}

\begin{lemma}\label{L8}
Let $p\in [1,2]$, then for any functions $u, v \in X(Q_T)$
\begin{equation}\label{33}
\bigl\| |u|^{p} v \bigr\|_{L_{2}(Q_T)}
\leq c (T^{(2-p)/4} +T^{1/2}) \|u\|_{X(Q_T)}^{p}  \|v\|_{X(Q_T)}.
\end{equation}
\end{lemma}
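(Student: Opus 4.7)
The plan is to mirror the proof of Lemma~\ref{L7}, but replacing the $L_1$-in-time norm with an $L_2$-in-time norm, which forces a different application of H\"older's inequality in $t$. The restriction $p\leq 2$ will enter precisely at that point.

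First, I reuse the pointwise-in-$t$ estimate \eqref{32} from the previous lemma, namely
$$
\bigl\||u(t,\cdot)|^p v(t,\cdot)\bigr\|_{L_2(I)} \leq c\bigl(\|u_x(t,\cdot)\|_{L_2(I)}^{p/2}\|u(t,\cdot)\|_{L_2(I)}^{p/2} + \|u(t,\cdot)\|_{L_2(I)}^{p}\bigr)\|v(t,\cdot)\|_{L_2(I)},
$$
which is a direct consequence of the interpolation inequality \eqref{8}. Squaring this and integrating in $t$, it suffices to bound the two resulting terms separately in $L_2(0,T)$.

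For the second (easy) term, the factor $\|u(t,\cdot)\|_{L_2(I)}^{p}\|v(t,\cdot)\|_{L_2(I)}$ is bounded pointwise by $\|u\|_{X(Q_T)}^p\|v\|_{X(Q_T)}$, so its $L_2(0,T)$ norm is at most $T^{1/2}\|u\|_{X(Q_T)}^p\|v\|_{X(Q_T)}$. For the first term I bring out the supremum of $\|u(t,\cdot)\|_{L_2(I)}^{p/2}\|v(t,\cdot)\|_{L_2(I)}$ (which is controlled by $\|u\|_{X(Q_T)}^{p/2}\|v\|_{X(Q_T)}$) and then apply H\"older with exponents $2/p$ and $2/(2-p)$ to estimate
$$
\int_0^T \|u_x(t,\cdot)\|_{L_2(I)}^{p}\,dt \leq T^{(2-p)/2}\|u_x\|_{L_2(Q_T)}^{p}.
$$
Taking square roots and combining with the sup factor yields the bound $T^{(2-p)/4}\|u\|_{X(Q_T)}^{p}\|v\|_{X(Q_T)}$ for this contribution.

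Summing the two contributions gives exactly \eqref{33}. The only delicate step is the H\"older split above: it requires $p\leq 2$ so that $2/(2-p)$ is a legitimate conjugate exponent (and so that the power of $T$ is nonnegative). This is precisely the obstruction that prevents extending the estimate beyond $p=2$ using only the space $X(Q_T)$, and it is the main, and only, structural constraint in the argument.
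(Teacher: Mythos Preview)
Your proof is correct and follows essentially the same route as the paper's own argument: starting from the pointwise-in-$t$ estimate \eqref{32}, bounding the ``pure $L_2$'' term by pulling out the supremum to get the $T^{1/2}$ factor, and handling the $\|u_x\|_{L_2(I)}^{p/2}$ term via H\"older in $t$ with exponents $2/p$ and $2/(2-p)$, which is exactly where the restriction $p\leq 2$ enters. The only cosmetic difference is that the paper computes the $L_2(0,T)$ norm of each term directly rather than squaring first, but the content is identical.
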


\begin{proof}
Applying estimate \eqref{32} and the H\"{o}lder inequality,
we find that 
\begin{multline*}
\bigl\| \|u_x\|_{L_2(I)}^{p/2} \|v\|_{L_2(I)}\bigr\|_{L_{2}(0,T)}  \\ \leq
T^{(2-p)/4} \sup\limits_{t\in [0,T]} 
\Bigl[\|u(t,\cdot)\|_{L_2(I)}^{p/2} \|v(t,\cdot)\|_{L_2(I)} \Bigr] 
 \|u_x\|_{L_2(Q_T)}^{p/2}\\ \leq
T^{(2-p)/4} \|u\|_{X(Q_T)}^{p} \|v\|_{X(Q_T)}.
\end{multline*}
Finally,
\begin{multline*}
\bigl\| \|u\|_{L_2(I)}^{p} \|v\|_{L_2(I)} \bigr\|_{L_{2}(0,T)} \leq
T^{1/2}\sup\limits_{t\in [0,T]} \Bigl[ \|u(t,\cdot)\|_{L_2(I)}^{p}  \|v(t,\cdot)\|_{L_2(I)} \Bigr]\\ \leq
T^{1/2}\|u\|_{X(Q_T)}^{p} \|v\|_{X(Q_T)}.
\end{multline*}
\end{proof}

\begin{lemma}\label{L9}
Let $p\in [1,2]$, then for any functions $u, v, w \in X(Q_T)$
\begin{equation}\label{34}
\bigl\| |u|^{p-1} v w_x\bigr\|_{L_{1}(0,T;L_2(I))}
\leq c (T^{(2-p)/4} +T^{1/2}) \|u\|_{X(Q_T)}^{p-1}  \|v\|_{X(Q_T)}  \|w\|_{X(Q_T)}.
\end{equation}
\end{lemma}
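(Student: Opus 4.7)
The plan is to follow the pattern of Lemmas~\ref{L7} and \ref{L8}: control the integrand in $L_2(I)$ by an $L_\infty$--$L_\infty$--$L_2$ splitting, invoke the interpolating inequality \eqref{8} to trade sup-norms for $L_2$-norms of functions and derivatives, and finally use H\"older in $t$ to pull out the required power of $T$.

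Concretely, starting from the pointwise bound $\bigl\||u|^{p-1}vw_x\bigr\|_{L_2(I)}\leq \|u\|_{L_\infty(I)}^{p-1}\|v\|_{L_\infty(I)}\|w_x\|_{L_2(I)}$, I would apply \eqref{8}. Since $p-1\in[0,1]$, subadditivity of $s\mapsto s^{p-1}$ yields
$$
\|u\|_{L_\infty(I)}^{p-1}\leq c\bigl(\|u_x\|_{L_2(I)}^{(p-1)/2}\|u\|_{L_2(I)}^{(p-1)/2}+\|u\|_{L_2(I)}^{p-1}\bigr),
$$
together with the ordinary Gagliardo--Nirenberg bound for $\|v\|_{L_\infty(I)}$. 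Multiplying the two and expanding produces four terms, each of the shape $\|u_x\|^{\alpha_1}\|u\|^{(p-1)-\alpha_1}\|v_x\|^{\alpha_2}\|v\|^{1-\alpha_2}$ with $\alpha_1\in\{0,(p-1)/2\}$ and $\alpha_2\in\{0,1/2\}$.

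Next I would integrate each of these four terms against $\|w_x\|_{L_2(I)}$ over $t\in(0,T)$. The $L_2(I)$-factors of $u$ and $v$ without derivatives are pulled out as suprema in $t$, contributing $\|u\|_{X(Q_T)}^{p-1-\alpha_1}\|v\|_{X(Q_T)}^{1-\alpha_2}$. The remaining factors $\|u_x\|^{\alpha_1}$, $\|v_x\|^{\alpha_2}$, $\|w_x\|$ are handled by H\"older in $t$: the natural choice is $L_{4/(p-1)}(0,T)$ for $\|u_x\|^{(p-1)/2}$, $L_{4}(0,T)$ for $\|v_x\|^{1/2}$, and $L_{2}(0,T)$ for $\|w_x\|$, so that each derivative-factor becomes the corresponding $L_2(Q_T)$-norm raised to the correct power. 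The sum of the H\"older exponents is $(p-1)/4+1/4+1/2=(p+2)/4\leq1$, and the deficit $1-(p+2)/4=(2-p)/4$ is supplied by a factor $T^{(2-p)/4}$; for the terms where some $\alpha_j=0$, the same bookkeeping yields the larger exponents $T^{(3-p)/4}$, $T^{1/4}$, or $T^{1/2}$.

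Finally, since for $p\in[1,2]$ all of $(2-p)/4$, $(3-p)/4$, $1/4$, $1/2$ lie in the interval $[(2-p)/4,1/2]$, and $T^\gamma\leq T^{(2-p)/4}+T^{1/2}$ whenever $\gamma$ lies between those two exponents (consider $T\geq1$ and $T\leq1$ separately), all four contributions combine into the claimed bound \eqref{34}. The main technical point is the critical balance in step three: for $p=2$ the H\"older exponents sum exactly to $1$, leaving no slack and forcing the choice of $L_{4/(p-1)}$--$L_4$--$L_2$; any cruder estimate would lose a power of $T$ and fail at the endpoint $p=2$.
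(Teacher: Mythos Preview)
Your proposal is correct and follows exactly the paper's approach: the same $L_\infty$--$L_\infty$--$L_2$ splitting, the same use of \eqref{8} on $u$ and $v$, the same H\"older decomposition in $t$ with exponents $4/(p-1)$, $4$, $2$, and the same identification of $(2-p)/4$ as the critical deficit. You are actually slightly more thorough than the paper, which writes out only the two extreme terms (both derivative factors present, yielding $T^{(2-p)/4}$; none present, yielding $T^{1/2}$) and leaves the two mixed cross-terms implicit; your observation that their exponents $(3-p)/4$ and $1/4$ lie between $(2-p)/4$ and $1/2$, and are therefore absorbed into $T^{(2-p)/4}+T^{1/2}$, fills that small gap.
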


\begin{proof}
Applying interpolating inequality \eqref{8} we find that
\begin{multline*}
\bigl\| |u|^{p-1} v w_x\bigr\|_{L_2(I)} 
\leq \|u\|_{L_{\infty}(I)}^{p-1} \|v\|_{L_{\infty}(I)}  \| w_x\|_{L_2(I)} \\ \leq
c\left(\|u_x\|_{L_2(I)}^{(p-1)/2}\|u\|_{L_2(I)}^{(p-1)/2} + \!\|u\|_{L_2(I)}^{p-1}\right) \!\!
\left(\|v_x\|_{L_2(I)}^{1/2}\|v\|_{L_2(I)}^{1/2} + \!\|v\|_{L_2(I)}\right) \!
\|w_x\|_{L_2(I)}.
\end{multline*}
Here because of the restriction on $p$
$$
1- \frac{p-1}{4} -\frac14 - \frac{1}2 = \frac {2-p}4 \geq 0
$$
and, applying the H\"{o}lder inequality, we find that 
\begin{multline*}
\bigl\| \|u_x\|_{L_2(I)}^{(p-1)/2} \|v_x\|_{L_2(I)}^{1/2} \|w_x\|_{L_2(I)}\|u\|_{L_2(I)}^{(p-1)/2} \|v\|_{L_2(I)}^{1/2} \bigr\|_{L_{1}(0,T)}  \\ \leq
T^{(2-p)/4} \sup\limits_{t\in [0,T]} 
\Bigl[\|u(t,\cdot)\|_{L_2(I)}^{(p-1)/2} \|v(t,\cdot)\|_{L_2(I)}^{1/2} \Bigr] 
 \|u_x\|_{L_2(Q_T)}^{(p-1)/2} \|v_x\|_{L_2(Q_T)}^{1/2} \|w_x\|_{L_2(Q_T)} \\ \leq
T^{(2-p)/4} \|u\|_{X(Q_T)}^{p-1} \|v\|_{X(Q_T)} \|w\|_{X(Q_T)}.
\end{multline*}
Finally,
\begin{multline*}
\bigl\| \|u\|_{L_2(I)}^{p-1} \|v\|_{L_2(I)} \|w_x\|_{L_2(I)}\bigr\|_{L_{1}(0,T)} \\ \leq
T^{1/2} \sup\limits_{t\in [0,T]} \Bigl[ \|u(t,\cdot)\|_{L_2(I)}^{p-1}  \|v(t,\cdot)\|_{L_2(I)} \Bigr]  \|w_x\|_{L_2(Q_T)}  \\ \leq
T^{1/2}\|u\|_{X(Q_T)}^{p-1} \|v\|_{X(Q_T)} \|w\|_{X(Q_T)}.
\end{multline*}
\end{proof}

\begin{proof}[Proof of existence part of Theorem \ref{T1}]
For a function $v\in X(Q_T)$ set
\begin{gather}\label{35}
f_{00}(t,x;v) \equiv  f(t,x) - \lambda |v|^{p_0} v, \quad 
f_{01}(t,x;v) \equiv i \gamma |v|^{p_1} v_x, \\
\label{36}
f_0(t,x;v) \equiv f_{00}(t,x;v) + f_{01}(t,x;v), \quad
f_1(t,x;v) \equiv i (\beta + \gamma) |v|^{p_1} v
\end{gather}
and consider the corresponding controllability problem for equation \eqref{20}. Lemmas \ref{L7}--\ref{L9} provide that $f_0 \in L_1(0,T;L_2(I))$, $f_1 \in L_2(Q_T)$. Then Theorem \ref{T2} implies that there exist a function $h\in L_2(0,T)$ and the corresponding unique solution $u\in X(Q_T)$ of problem \eqref{20}, \eqref{2}, \eqref{3}, verifying condition \eqref{4}. Therefore, on the space $X(Q_T)$ one can define a map $\Theta$, where $u = \Theta v$ is given by formulas \eqref{30}.
Moreover, according to \eqref{31} 
\begin{equation}\label{37}
\|f_{00}(\cdot,\cdot;v)\|_{L_1(0,T;L_2(I))} \leq  \|f\|_{L_1(0,T;L_2(I))} + c(T) \|v\|_{X(Q_T)}^{p_0+1}
\end{equation}
and according to \eqref{33}, \eqref{34}
\begin{equation}\label{38}
\|f_{01}(\cdot,\cdot,v)\|_{L_1(0,T;L_2(I))},  \|f_1(\cdot,\cdot,v)\|_{L_2(Q_T)} 
\leq c(T)\|v\|_{X(Q_T)}^{p_1+1}.
\end{equation}

Apply Lemma~\ref{L6}, then inequality \eqref{22} and formulas \eqref{30}
imply that
\begin{equation}\label{39}
\|\Theta v\|_{X(Q_T)} \leq c(T)c_0 + c(T) \left(\|v\|_{X(Q_T)}^{p_0+1} 
+\|v\|_{X(Q_T)}^{p_1+1}\right),
\end{equation}
where the value of $c_0$ is given by \eqref{7}.

Next, for any functions $v_1, v_2 \in X(Q_T)$
\begin{equation}\label{40}
|f_{00}(t,x;v_1) - f_{00}(t,x;v_2)| \leq c\bigl(|v_1|^{p_0} + |v_2|^{p_0}\bigr) |v_1 -v_2|,
\end{equation}
\begin{multline}\label{41}
|f_{01}(t,x;v_1) - f_{01}(t,x;v_2)| \leq c\bigl(|v_1|^{p_1} + |v_2|^{p_1}\bigr)
 |v_{1x} - v_{2x}| \\+
c\bigl(|v_1|^{p_1-1} + |v_2|^{p_1-1} \bigr) \bigl(|v_{1x}| + |v_{2x}|\bigr) |v_1-v_2|,  
\end{multline}
\begin{equation}\label{42}
|f_1(t,x;v_1) - f_1(t,x;v_2)| \leq c\bigl(|v_1|^{p_1} + |v_2|^{p_1}\bigr) |v_1-v_2|,
\end{equation}
therefore, similarly to \eqref{37}
\begin{multline*}
\|f_{00}(\cdot,\cdot;v_1) - f_{00}(\cdot,\cdot;v_2)\|_{L_1(0,T;L_2(I))} \\ \leq
c(T) \bigl(\|v_1\|_{X(Q_T)}^{p_0} + \|v_2\|_{X(Q_T)}^{p_0}\bigr) \|v_1-v_2\|_{X(Q_T)},
\end{multline*}
and similarly to \eqref{38}
\begin{multline*}
\|f_{01}(\cdot,\cdot;v_1) - f_{01}(\cdot,\cdot;v_2)\|_{L_1(0,T;L_2(I))},  \|f_1(\cdot,\cdot,v_1) - f_1(\cdot,\cdot;v_2)\|_{L_2(Q_T)} \\ \leq 
c(T) \bigl(\|v_1\|_{X(Q_T)}^{p_1} + \|v_2\|_{X(Q_T)}^{p_1}\bigr) \|v_1 - v_2\|_{X(Q_T)}.
\end{multline*}

Since
\begin{multline*}
\Theta v_1 - \Theta v_2 =  S\bigl(0,0,0,0, f_0(t,x;v_1) - f_0(t,x;v_2), f_1(t,x;v_1) - f_1(t,x;v_2)\bigr) \\-
(S_0\circ \Gamma)\Bigl(S\bigl(0,0,0,0, f_0(t,x;v_1) - f_0(t,x;v_2), f_1(t,x;v_1) - f_1(t,x;v_2)\bigr)\Bigr),
\end{multline*}
it follows similarly to \eqref{39} that
\begin{multline}\label{43}
\|\Theta v_1 - \Theta v_2\|_{X(Q_T)}  \\ \leq 
c(T)\left( \|v_1\|_{X(Q_T)}^{p_0} + \|v_1\|_{X(Q_T)}^{p_1} + 
\|v_2\|_{X(Q_T)}^{p_0} + \|v_2\|_{X(Q_T)}^{p_0} \right)\|v_1-v_2\|_{X(Q_T)}.
\end{multline}

Now choose $r>0$ such that
$$
r^{p_0} + r^{p_1} \leq \frac1{4c(T)}
$$
and then $\delta>0$ such that
$$
\delta \leq \frac{r}{2c(T)}.
$$
Then it follows from \eqref{39} and \eqref{43} that on the ball $\overline X_r(Q_T)$ the map $\Theta$ is a contraction. Its unique fixed point $u\in X(Q_T)$ is the desired solution. 
\end{proof}

The contraction principle used in the previous proof ensures uniqueness of the solution $u$ only in the ball $\overline X_r(Q_T)$. The next theorem provides uniqueness in the whole space $X(Q_T)$, which finishes the proof of Theorem~\ref{T1}.

\begin{theorem}\label{T3}
A weak solution of problem \eqref{1}--\eqref{3} is unique in the space $X(Q_T)$ if $p_0\in [1,4]$, $p_1\in [1,2]$.
\end{theorem}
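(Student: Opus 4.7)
The plan is the standard local-in-time contraction scheme, iterated along the time axis. Let $u_1,u_2\in X(Q_T)$ be two weak solutions of \eqref{1}--\eqref{3} sharing the same data $u_0,\mu,\nu,h,f$, and set $w=u_1-u_2$. Subtracting the integral identity \eqref{6} for $u_2$ from \eqref{6} for $u_1$, and using the algebraic identity $\beta(|u|^{p_1}u)_x+\gamma(|u|^{p_1})_x u=(\beta+\gamma)(|u|^{p_1}u)_x-\gamma|u|^{p_1}u_x$ to split the $p_1$-nonlinearity into a divergence piece and an $L_1(0,T;L_2(I))$ piece (as in \eqref{35}--\eqref{36}), one checks that $w$ is the weak solution, in the sense of \eqref{21}, of the linear problem \eqref{20}, \eqref{2}, \eqref{3} with $u_0\equiv 0$, $\mu\equiv\nu\equiv 0$, $h\equiv 0$, and right-hand side $F_0-F_{1x}$ where
\[
F_0 = -\lambda\bigl(|u_1|^{p_0}u_1-|u_2|^{p_0}u_2\bigr) + i\gamma\bigl(|u_1|^{p_1}u_{1x}-|u_2|^{p_1}u_{2x}\bigr), \qquad F_1 = i(\beta+\gamma)\bigl(|u_1|^{p_1}u_1-|u_2|^{p_1}u_2\bigr).
\]

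Next I would estimate these nonlinear differences in the norms required by Lemma~\ref{L2}. The pointwise bounds \eqref{40}--\eqref{42} (the middle one already handling $|u_1|^{p_1}u_{1x}-|u_2|^{p_1}u_{2x}$) combined with Lemmas~\ref{L7}--\ref{L9} applied on the subinterval $(0,\tau)$, exactly as in the derivation of \eqref{43}, yield for every $\tau\in(0,T]$
\[
\|F_0\|_{L_1(0,\tau;L_2(I))}+\|F_1\|_{L_2(Q_\tau)} \le c\,\omega(\tau)\,K\,\|w\|_{X(Q_\tau)},
\]
where $K$ is a fixed finite combination of the quantities $\|u_j\|_{X(Q_T)}^{p_0}$, $\|u_j\|_{X(Q_T)}^{p_1}$ and $\|u_j\|_{X(Q_T)}^{p_1-1}\|u_k\|_{X(Q_T)}$, and $\omega(\tau):=\tau^{(4-p_0)/4}+\tau^{(2-p_1)/4}+\tau^{1/2}+\tau$ tends to $0$ as $\tau\to 0^+$ precisely because of the hypotheses $p_0\le 4$ and $p_1\le 2$. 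Feeding these bounds into estimate \eqref{12} of Lemma~\ref{L2} (which applies since the initial and boundary data of $w$ vanish, so the full Lemma~\ref{L6} machinery is not needed) gives
\[
\|w\|_{X(Q_\tau)} \le c(T)\,\omega(\tau)\,K\,\|w\|_{X(Q_\tau)}.
\]

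Finally, I would fix $\tau_0\in(0,T]$ so small that $c(T)\omega(\tau_0)K<1$; this forces $\|w\|_{X(Q_{\tau_0})}=0$, i.e.\ $u_1\equiv u_2$ on $[0,\tau_0]\times I$. Since $w(\tau_0,\cdot)\equiv 0$ and the constant $c(\cdot)$ in Lemma~\ref{L2} is non-decreasing in the length of the interval, the same reasoning with the time origin shifted to $\tau_0$ covers $[\tau_0,2\tau_0]$, and iterating finitely many times exhausts $[0,T]$ (the step size $\tau_0$ depends only on $T$ and $K$, not on the iteration). The only delicate point I expect is to verify that \emph{every} nonlinear summand genuinely contributes a positive power of $\tau$ after integration; this is the reason for the algebraic rearrangement of the $\gamma$-term above and for the pointwise bound \eqref{41}, which together reduce the derivative-carrying piece $|u_1|^{p_1}u_{1x}-|u_2|^{p_1}u_{2x}$ to a form directly amenable to Lemma~\ref{L9}, where the restriction $p_1\le 2$ is essential for the small-time factor.
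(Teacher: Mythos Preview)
Your argument works for $p_0<4$ and $p_1<2$, but has a genuine gap at the endpoint exponents. The function $\omega(\tau)=\tau^{(4-p_0)/4}+\tau^{(2-p_1)/4}+\tau^{1/2}+\tau$ does \emph{not} tend to zero when $p_0=4$ (the first summand equals $1$) or when $p_1=2$ (the second summand equals $1$); Lemmas~\ref{L7}--\ref{L9} produce the factors $T^{(4-p)/4}+T$ and $T^{(2-p)/4}+T^{1/2}$, which stay bounded away from zero at those values once you have replaced $\|u_j\|_{X(Q_\tau)}$ by the fixed constant $K$. Hence $c(T)\,\omega(\tau)\,K<1$ cannot be arranged by shrinking $\tau$, and the contraction step fails precisely at the borderline cases the theorem is meant to cover. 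The gap is repairable: going back into the proofs of Lemmas~\ref{L7}--\ref{L9} one sees that the endpoint terms always carry an explicit factor $\|u_{jx}\|_{L_2(Q_\tau)}$ (or a positive power thereof), and this quantity does tend to zero by absolute continuity of the Lebesgue integral, uniformly in the location of the subinterval, so the iteration can still be made to march with a fixed step. But this requires finer bookkeeping than the blunt factorisation $\omega(\tau)\cdot K$ you wrote.

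The paper avoids this endpoint issue by taking a different route. Instead of a small-time contraction, it applies the weighted energy identity \eqref{13} with $\rho(x)=1+x$ directly to $w=u-\widetilde u$, estimates each nonlinear term on the right-hand side pointwise in $t$ (using \eqref{8} together with \eqref{40}--\eqref{42}, and absorbing the resulting $\int|w_x|^2$ pieces into the dissipative term on the left), and obtains
\[
\frac{d}{dt}\int_0^R (1+x)|w|^2\,dx \le \omega(t)\int_0^R (1+x)|w|^2\,dx
\]
with $\omega\in L_1(0,T)$; Gronwall then yields $w\equiv 0$ on the whole interval in one stroke. Here the restrictions $p_0\le 4$, $p_1\le 2$ enter only through the membership $u,\widetilde u\in L_4(0,T;L_\infty(I))$ (so that $|u|^{p_0}$, $|u|^{2p_1}$, $|u|^{4(p_1-1)}\in L_1(0,T;L_\infty(I))$), and no smallness in time is required at all.
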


\begin{proof}
Let $u, \widetilde u \in X(Q_T)$ be two weak solutions of the same problem \eqref{1}--\eqref{3}. Denote $w \equiv u - \widetilde u$, then the function $w\in X(Q_T)$ is the weak solution of the problem of \eqref{9}, \eqref{10} type for $f \equiv f_0 - f_{1x}$, where 
\begin{gather*}
f_0(t,x) \equiv f_{00}(t,x;u) - f_{00}(t,x;\widetilde u) + f_{01}(t,x;u) - f_{01}(t,x;\widetilde u), \\
f_1(t,x) \equiv f_1(t,x;u) - f_1(t,x;\widetilde u),
\end{gather*}
given by formulas \eqref{35}, \eqref{36}. Similarly to the previous proof $f_0 \in L_1(0,T;L_2(I))$, $f_1 \in L_2(Q_T)$. Then the corresponding equality \eqref{13} in the case $\rho(x) \equiv 1+x$ yields that
\begin{multline}\label{44}
\frac{d}{dt}\int_0^R (1+x)|w(t,x)|^2\,dx + 3\int_0^R |w_x|^2 \,dx  = b \int_0^R |w|^2 \,dx   
+2a \Im \int_0^R w_x \bar{w} \,dx \\
+ 2\Im \int_0^R (1+x)f_0\bar{w} \,dx 
+ 2 \Im \int_0^R f_1 \bigl((1+x) \bar{w}_x + \bar{w}\bigr) \,dx.
\end{multline}
To estimate the last two integrals in the right-hand side of \eqref{44} apply inequalities \eqref{40}--\eqref{42}. Then
$$
\Bigl|\int_0^R (1+x)\bigl(f_{00}(t,x;u) - f_{00}(t,x;\widetilde u)\bigr) \bar{w} \,dx  \Bigr| \leq
c \esssup\limits_{x\in I} \bigl(|u|^{p_0} + |\widetilde u|^{p_0}\bigr) \int_0^R |w|^2\, dx,
$$
\begin{multline*}
\Bigl|\int_0^R (1+x)\bigl(f_{1}(t,x;u) - f_{1}(t,x;\widetilde u)\bigr) \bar{w_x} \,dx \Bigr| \leq
c \esssup\limits_{x\in I} \bigl(|u|^{p_1} + |\widetilde u|^{p_1}\bigr) \int_0^R |w w_x|\, dx \\
\leq \varepsilon \int_0^R |w_x|^2 \,dx  + c(\varepsilon)
\esssup\limits_{x\in I} \bigl(|u|^{2p_1} + |\widetilde u|^{2p_1}\bigr) \int_0^R |w|^2 \,dx,
\end{multline*}
where $\varepsilon>0$ can be chosen arbitrarily small,
\begin{multline*}
\Bigl|\int_0^R (1+x)\bigl(f_{01}(t,x;u) - f_{01}(t,x;\widetilde u)\bigr) \bar{w} \,dx \Bigr|  \leq
c \esssup\limits_{x\in I} \bigl(|u|^{p_1} + |\widetilde u|^{p_1}\bigr) \int_0^R |w w_x|\, dx
\\ +c \esssup\limits_{x\in I} \Bigl[\bigl(|u|^{p_1-1} + |\widetilde u|^{p_1-1}\bigr) |w| \bigr]
\int_0^R \bigl(|u_x| + |\widetilde u_x|\bigr) |w| \,dx,
\end{multline*}
where the first term in the right-hand side is already estimated above, while the second one does not exceed
\begin{multline*}
c \Bigl(\int_0^R |w_x|^2 \,dx \Bigr)^{1/4}  
\esssup\limits_{x\in I} \bigl(|u|^{p_1-1} + |\widetilde u|^{p_1-1}\bigr) 
\Bigl(\int_0^R \bigl(|u_x|^2 + |\widetilde u_x|^2\bigr) \,dx \Bigr)^{1/2} \\ \times
\Bigl(\int_0^R |w|^2 \,dx \Bigr)^{3/4} \leq
\varepsilon \int_0^R |w_x|^2 \,dx  + c(\varepsilon) 
\Bigl[\esssup\limits_{x\in I} \bigl(|u|^{4(p_1-1)} + |\widetilde u|^{4(p_1-1)}\bigr)  \\ +
\int_0^R \bigl(|u_x|^2 + |\widetilde u_x|^2\bigr) \,dx \Bigr] 
\int_0^R |w|^2 \,dx
\end{multline*}
(here estimate \eqref{8} is used in the case of the space $H^1_0(I)$).
Note that according to \eqref{8} $u, \widetilde u \in L_4(0,T;L_\infty(I))$. Then since $p_0, 2p_1, 4(p_1-1) \leq 4$ it follows from \eqref{44} that
$$
\frac{d}{dt}\int_0^R (1+x)|w(t,x)|^2\,dx \leq \omega(t) \int_0^R (1+x)|w(t,x)|^2\,dx,
$$
for certain function $\omega\in L_1(0,T)$. Application of the Gronwall lemma yields that $w\equiv 0$.

\end{proof}

\end{document}